\newtheorem{theorem}{\bf Theorem}[section]
\newtheorem{assump}[theorem]{\bf Assumption}
\newtheorem{lemma}[theorem]{\bf Lemma}
\newtheorem{eg}[theorem]{\sc Example}
\newtheorem{definition}[theorem]{\bf Definition}
\newtheorem{remark}[theorem]{\bf Remark}
\newtheorem{cor}[theorem]{\bf Corollary}
\newcommand{\argmin}{{\rm arg\,min}}
\newcommand{\cB}{{\mathcal B}}
\newcommand{\cD}{{\mathcal D}}
\newcommand{\cH}{{\mathcal H}}
\newcommand{\cL}{{\mathcal L}}
\newcommand{\cM}{{\mathcal M}}
\newcommand{\cO}{{\mathcal O}}
\newcommand{\cT}{{\mathcal T}}
\newcommand{\cV}{{\mathcal V}}
\newcommand{\cW}{{\mathcal W}}
\newcommand{\cZ}{{\mathcal Z}}
\newcommand{\N}{\mathbb{N}}
\newcommand{\R}{\mathbb{R}}
\newcommand{\norm}[1]{|| #1||}
\newcommand{\ba}{\begin{array}}
\newcommand{\ea}{\end{array}}
\newcommand{\be}{\begin{equation}}
\newcommand{\ee}{\end{equation}}
\newcommand{\bea}{\begin{eqnarray}}
\newcommand{\eea}{\end{eqnarray}}
\newcommand{\beq}{\begin{equation}}
\newcommand{\eeq}{\end{equation}}
\newcommand{\bqt}{\begin{quote}}
\newcommand{\eqt}{\end{quote}}
\begin{document}

%
\title[Variational Source Condition with the Bregman Distance]
{Generalized Variational Source Condition Associated with the Bregman Distance-I: 
Verification of the Variational Source Condition and Stability of the Total Error Estimation}

%
\author{Erdem Altuntac}

\address{Institute for Numerical and Applied Mathematics,
University of G\"{o}ttingen, Lotzestr. 16-18,
D-37083, G\"{o}ttingen, Germany}


\ead{\mailto{e.altuntac@math.uni-goettingen.de}}

\begin{abstract}

A general deterministic analysis to state the necessary conditions with
a coefficient determination for the variational source 
condition to hold is provided. Of particular interest in terms
of the choice of the regularization parameter, Morozov's
discrepancy principle enables one to determine
new stable lower and upper bounds for the regularization parameter.
With these bounds, it is also possible to establish quantitative 
estimations for the index function
as well as for the different definitions of the Bregman distance.
Inclusion of the variational source 
condition into the stability analysis enables one 
to re-establish convergence and convergence rate 
results in terms of the index function. The coefficient in the 
variational source condition is explicitly defined as a multivariable 
function of constants in Morozov's discrepancy principle.
As expected, the results here are applicable
when any strictly convex, smooth/non-smooth objective functional is considered.

\end{abstract}

\bigskip


\section{Introduction}

Variational regularization has commenced by
introducing a new image denoising method named as 
{\em total variation}, \textbf{\cite{RudinOsherFatemi92}}.
Application and analysis of the method have been widely carried
out in the communities of inverse problem and optimization, 
\textbf{\cite{AcarVogel94, BachmayrBurger09, BardsleyLuttman09, ChambolleLions97,
ChanChen06, ChanGolubMulet99, DobsonScherzer96, 
DobsonVogel97, Vogel02, VogelOman96}}. In variational regularization the usage 
of Bregman distance as a tool for the convergence and convergence rate has been
well established over the last decade, 
\textbf{\cite{BurgerOsher04, Grasmair10, Grasmair13, GrasmairHaltmeierScherzer11, 
HofmannMathe12, HofmannYamamoto10, HohageWeidling15, HohageWeidling16, Lorenz08}}. 
As alternative to well known regularization theory 
for minimizing the quadratic Tikhonov functional,
\textbf{\cite{Tikhonov63, TikhonovArsenin77}},
studying convex variational regularization with some general penalty term $J$
has recently become important. 
In a recent work by Hohage {\em et al.} 2015, \textbf{\cite[Eq. (2)]{HohageWeidling15}} 
and references therein, a conventional variational source condition (VSC) with 
a logarithmic index function $\Psi$ has been derived for an inverse scattering problem.
This work is followed up by another research wherein the coefficient verification
of the VSC has been carried out in \textbf{\cite{HohageWeidling16}}.
Authors in \textbf{\cite{HohageWeidling16}} have verified the existence of some
necessary coefficient in the VSC for quadratic Tikhonov functionals under 
some conditions. In this work, we study general type  
Tikhonov functional. We explore under which conditios the VSC hold
and the tight convergence rate results explicitly. The mathematical
development of this work entails the specific rule for
the choice of the regularization paramater
which is Morozov's discrepancy principle.


Hofmann and Math\'{e} {\em et al.} 2012,
\textbf{\cite{HofmannMathe12}}, {\em a priori} and {\em a posteriori}
strategies for the choice of the regularization parameter
in Banach spaces under the variational source condition 
to determine the total error estimation 
\bea
\label{total_err_est}
E(\varphi_{\alpha(\delta , f^{\delta})}^{\delta} , \varphi^{\dagger}) := 
\norm{\varphi_{\alpha(\delta , f^{\delta})}^{\delta} - \varphi^{\dagger}}_{\cV},
\eea
have been studied extensively. This work does
not necessarily convey any specific solution space $\cV$ since 
the penalty term of our objective functional is not specified.
By establishing some quantitative analysis for the Bregman
distance $D_{J},$ the total error estimation will also be stabilized
owing to the consideration of this work below,
\bea
\label{total_error_bregman}
E(\varphi_{\alpha(\delta , f^{\delta})}^{\delta} , \varphi^{\dagger}) \leq 
D_{J}(\varphi_{\alpha(\delta , f^{\delta})}^{\delta} , \varphi^{\dagger}).
\eea
Therefore, the objective of this work
is to investigate the stable bounds for $D_{J}$ in terms of 
an increasing and positive definite index function $\Psi$ 
depending on the noise amount $\delta$ such that
\bea
\label{regularization_strategy}
\alpha(\delta , f^{\delta}) \rightarrow 0 \mbox{ and } \frac{\delta^2}{\alpha(\delta , f^{\delta})} \rightarrow 0
\mbox{, as } \delta \rightarrow 0.
\eea

Organization of this work is as follows; In the following 
section, we give the necessary preliminaries that are
the base of entire mathematical development. In Section 
\ref{section_norm_convergence}, we review the fundamentals
of Morozov's discrepancy principle. We then move on to the
study of verification of the generalized variational
source condition in conjunction with coefficient determination. 
Those conditions will be related
to the rule for the choice of the regularization parameter.
In Section \ref{interval_for_regpar}, we, in light of our 
{\em a posteriori} choice of the regularization parameter, introduce a
new interval for the value of the regularization
paremeter, {\em i.e.} stable upper and lower bounds.
It is with these bounds that we will be able to stabilize
the total error estimation. Final scientific development
will be given in Section \ref{convergence_by_VSC}.
We also propose a different form the VSC in the Appendix
\ref{symmetric_VSC} by comparing the reverse form of the usual
Bregman distance to the positive definite index function.


\section{Notations and Prerequisite Knowledge}
\label{notations}


\subsection{Assumptions about the forward operator and the penalty term}
\label{notations_regularization_theory}

Denote by $\cV$ and $\cH$ some reflexive/non-reflexive Banach 
and Hilbert spaces respectively. 
For the given linear, injective and compact
forward operator $\cT : \cD(\cT) = \cV \rightarrow \cH ,$
we consider solving a linear ill-posed operator equation
formulated by
\bea
\label{operator_problem}
\cT\varphi = f^{\dagger}.
\eea
The usual inverse problem is that of reconstruction
of the approximate solution $\varphi_{\alpha}^{\delta}$ 
by minimizing a general Tikhonov type objective functional
\bea
\label{cost_functional}
F_{\alpha} : & \cV \times \cH & \longrightarrow \R_{+} ,
\nonumber\\
& (\varphi , f^{\delta}) & \longmapsto 
F_{\alpha}(\varphi , f^{\delta}) := 
\frac{1}{2}\norm{\cT\varphi - f^{\delta}}_{\cH}^2 + \alpha J(\varphi) ,
\eea
from the given data $f^{\delta}$ of the exact right-hand side $f^{\dagger} \in \cH$ with
\begin{displaymath}
f^{\delta} \in \cB_{\delta}(f^{\dagger}), \textit{\mbox{ i.e. }} 
\norm{f^{\dagger} - f^{\delta}}_{\cH} \leq \delta .
\end{displaymath}
In (\ref{cost_functional}), the nonsmooth $J : \cV \rightarrow \R_{+}$
is the convex regularizer with the regularization 
parameter $\alpha > 0$ before it.
It is assumed that the any non-zero constant function
under the image of the forward operator does not vanish, 
and this fact can be formulated as follows,
\bea
\label{forward_op_constant}
\cT 1 \neq 0.
\eea
The real valued solution function $\varphi$ is defined on a compact
domain $\Omega.$  


\subsection{The existence and the uniqueness of the minimizer}
\label{existence_uniqueness}

Our argument on the existence and the uniqueness of the minimizer
is rather pre-assumptional since this work aims to provide some 
general analysis. Throughout the available literature, {\em e.g.} 
\textbf{\cite[p. 2-3]{HofmannMathe12}}, 
\textbf{\cite[3rd of Assumption 2.1]{HofmannYamamoto10}}, 
\textbf{\cite[4th of Assumption 3.13]{ScherzerGrasmair09}},
the sublevel sets of the objective functional $F_{\alpha},$
or of the penalty term $J,$ have been assumed to be 
sequentially pre-compact. However, if one considers
the penalty term as
\bea
J_{\beta}^{\mathrm{TV}}(\varphi) := \int_{\Omega} \sqrt{\vert \nabla\varphi(x) \vert_2^2 + \beta} dx,
\nonumber
\eea
then it can be shown in a counterexample that the sublevel sets for 
$J_{\beta}^{TV}$ are not sequentially pre-compact.

\begin{eg} 
According to \textbf{\cite[p. 2]{HofmannMathe12}}, the sublevel 
sets for $J_{\beta}^{\mathrm{TV}}$ are defined below,

\begin{displaymath}
\cM_{R}^{J_{\beta}^{\mathrm{TV}}} := \{ \varphi \in \cW^{1,2}(\Omega) : 
J_{\beta}^{\mathrm{TV}}(\varphi) \leq R \}, \mbox{ for } R > 0 .
\end{displaymath}
Obviously $\cM_{R}^{J_{\beta}^{\mathrm{TV}}} \subset BV(\Omega).$
To ensure that the sublevel sets are weakly sequentially compact, 
one must show that every sequence $\varphi_{n} \in \cM_{R}^{J_{\beta}^{\mathrm{TV}}}$ 
has a weakly convergent subsequence with the limit in $\cM_{R}^{J_{\beta}^{\mathrm{TV}}},$
{\em i.e.} the sequence $\varphi_{n} \in \cM^{J_{\beta}^{\mathrm{TV}}}$ has a subsequence 
$\{\varphi_{n_k}\}_{k = 1}^{\infty} \subset \{ \varphi_{n} \}_{n = 1}^{\infty}$ 
such that $\varphi_{n_k} \rightharpoonup \varphi^{\ast}$ as $k \rightarrow \infty$
where $\varphi^{\ast} \in \cM_{R}^{J_{\beta}^{\mathrm{TV}}}.$ However,
it can be shown that the sublevel sets 
$\cM_{R}^{J_{\beta}^{\mathrm{TV}}}$ contain sequence which does
not have weakly convergent subsequence. To do so, for some real function
$\tilde{\varphi} \in \cM_{R}^{J_{\beta}^{\mathrm{TV}}},$ consider the
sequence $\varphi_n(x) = \tilde{\varphi}(x) + n 1,$ where $x \in \Omega.$
Although, for the defined sequence 
$J_{\beta}^{\mathrm{TV}}(\varphi_n) = J_{\beta}^{\mathrm{TV}}(\tilde{\varphi}) \leq R$
holds for any $n \in \N$ the sequence $\varphi_n$ cannot have weakly
convergent subsequence in $BV(\Omega)$ since

\bea
\norm{\varphi_n - \varphi_m}_{BV(\Omega)} & \geq & \norm{\varphi_n - \varphi_m}_{\cL^1(\Omega)}
\nonumber\\
& \geq & \vert n- m \vert \vert \Omega \vert \rightarrow \infty \mbox{ as } n \rightarrow \infty .
\eea

\end{eg}
According to \textbf{\cite[Theorem 3.1]{AcarVogel94}},
in order to ensure the existence of the regularized solution,
one must be able to ensure the $BV$-coercivity of the objective functional 
$F_{\alpha}.$ Usually general type of Tikhonov functionals are not strictly convex since
the forward operator $\cT$ may not necessarily be injective. In our case,
uniqueness of the regularized solution is a result of the strict convexity of the
objective functional since the forward operator $\cT$ is assumed to be
injective. 


\subsection{Bregman distance}
\label{bregman_divergence_def}

Conventional thorough feedback for the following terminology can be found
in \textbf{\cite{Burger15, Grasmair13, ScherzerGrasmair09}}.

\begin{definition}\textbf{[Subdifferential]}
Let $J : \cV \rightarrow \R_{+} \cup \{ \infty \}$ be defined on an appropriate Banach space and  
be some convex functional. Then subdifferential $\partial J(u) \subset \cV^{\ast}$
of $J$ at $u \in \cV$ is defined as the set of all $p \in \cV^{\ast}$
satisfying the inequality
\bea
\label{subdiff_ineq}
J(v) - J(u) - \langle p , v - u \rangle \geq 0 \mbox{, for all } v \in \cV .
\eea
Here the element $p \in \partial J(u)$ is called the subgradient.
\end{definition}
Note that when $\partial J(u)$ is a singleton, G\^{a}teaux differentiability
and subdifferentiability of $J$ are equal to each other.

\begin{definition}\textbf{[The Generalized Bregman Distances]}
Let $J : \cV \rightarrow \R_{+} \cup \{ \infty \}$ be a convex functional 
with the subgradient $p \in \partial J(u^{\ast}).$ 
Then, for $u, u^{\ast} \in \cV,$ Bregman distance associated with the functional 
$J$ is defined by
\bea
\label{bregman_divergence_intro}
D_{J} : & \cV \times \cV & \longrightarrow \R_{+}
\nonumber\\
& (u , u^{\ast}) & \longmapsto D_{J}(u , u^{\ast}) := J(u) - J(u^{\ast}) - \langle p , u - u^{\ast} \rangle .
\eea
In addition to the traditional definition of Bregman distance in (\ref{bregman_divergence_intro}),
the {\em symmetric Bregman distance} is also given below, (cf. \textbf{\cite[Definition 2.1]{Grasmair13}}),
\beq
\label{symmetrical_bregman}
D_{J}^{\mathrm{sym}}(u , u^{\ast}) := D_{J}(u , u^{\ast}) + D_{J}(u^{\ast} , u) .
\eeq
From here, one can easily observe that
\bea
\label{symmetric_bregman_ordering}
D_{J}^{\mathrm{sym}}(u , u^{\ast}) \geq D_{J}(u , u^{\ast}) .
\eea
Same also holds if one replaces the right hand side by 
the reverse Bregman distance $D_{J}(u^{\ast} , u).$
\end{definition}


\subsection{Minimization problem}

The regularized solution $\varphi_{\alpha}^{\delta}$ is constructed by 
employing an appropriate regularization strategy
for the following convex variational 
minimization problem,
\beq
\label{problem0}
\varphi_{\alpha}^{\delta} \in 
\argmin_{\varphi \in \cV} F_{\alpha}(\varphi , f^{\delta}) .
\eeq
Inherently, this solution satisfies the following 
first order optimality condition, 
(cf. \textbf{\cite[Eq. (3.4)]{Burger15}}),
\bea
\label{optimality_1}
\frac{1}{\alpha} \cT^{\ast}(f^{\delta} - \cT\varphi_{\alpha}^{\delta}) \in \partial J(\varphi_{\alpha}^{\delta}) .
\eea


\begin{definition}\textbf{[$J$-minimizing Solution]}
\label{J-min_def}
Let $\cV$ be an appropriate Banach space and $\cH$ be
some Hilbert space. For some given linear, injective and compact 
forward operator $\cT : \cV \rightarrow \cH,$ the
$J$-minimizing solution is a solution to the linear operator equation
\bea
\cT\varphi = f^{\dagger}
\eea
if
\bea
\label{J-min_est}
J(\varphi^{\dagger}) := \min{ \{J(\varphi) : \varphi \in \cV, \mbox{ } \cT\varphi = f^{\dagger} \} }.
\eea
\end{definition}
Although our work rather focuses on determining the stable
upper bounds for the Bregman distance $D_{J},$ it is still
worthwhile to review some norm convergence rates both
in the image and in the pre-image spaces.
Owing to the {\em a posteriori} strategy for the choice
of regularization parameter $\alpha = \alpha(\delta , f^{\delta}),$
see subsection \ref{choice_of_regpar} for the details,
with the deterministic noise model 
$f^{\delta} \in \cB_{\delta}(f^{\dagger})$ 
in the measurement space, the following rates can be quantified;
\begin{enumerate}
\item $\cT\varphi_{\alpha(\delta , f^{\delta})}^{\delta} \in \cB_{\cO(\delta)}(\cT\varphi^{\dagger});$
norm of the discrepancy between 
$\cT\varphi_{\alpha(\delta , f^{\delta})}^{\delta}$ and $\cT\varphi^{\dagger}$
by the rate of $\cO(\delta),$ {\em i.e.} 
$\Vert \cT\varphi_{\alpha(\delta , f^{\delta})}^{\delta} - \cT\varphi^{\dagger} \Vert_{\cH} = \cO(\delta).$

\item $D_{J}(\varphi_{\alpha(\delta , f^{\delta})}^{\delta} , \varphi^{\dagger}) = \cO(\Psi(\delta));$
upper bound for the Bregman distance $D_J.$ 

\item $\varphi_{\alpha(\delta , f^{\delta})}^{\delta} \in \cB_{\cO(\Psi(\delta))}(\varphi^{\dagger});$
convergence of the regularized solution 
$\varphi_{\alpha(\delta , f^{\delta})}^{\delta}$ 
to the true solution $\varphi^{\dagger}$ by the rate of
the noise amount $\cO(\Psi(\delta)),$ {\em i.e.,}
$\norm{\varphi_{\alpha(\delta , f^{\delta})}^{\delta} - \varphi^{\dagger}}_{\cV} = \cO(\Psi(\delta)).$
\end{enumerate}
For derivation of these rates, we refer reader to \textbf{\cite{HofmannMathe12}}
and references therein.

\section{Convex Variational Regularization with the Choice of the Regularization Parameter}
\label{section_norm_convergence} 

It is in this section that we explicitly formulate
the necessary condition for the VSC to hold and
deliver a coefficient determination.

\subsection{Choice of the regularization parameter: Morozov's discrepancy principle}
\label{choice_of_regpar}

We are concerned with asymptotic properties of the
regularization parameter $\alpha$ for the Tikhonov-regularized solution
obtained by Morozov's discrepancy principle (MDP).
MDP serves as an \textit{a posteriori}
parameter choice rule for the Tikhonov type objective functionals  
(\ref{cost_functional}) and has certain impact 
on stabilizing the total error functional 
$E : \cV \times \cV \rightarrow \R_{+}$
having the assumed relation
\bea
E(\varphi_{\alpha(\delta , f^{\delta})}^{\delta} , \varphi^{\dagger})
\leq D_{J}(\varphi_{\alpha(\delta , f^{\delta})}^{\delta} , \varphi^{\dagger}).
\nonumber
\eea
As has been introduced in 
\textbf{\cite[Theorem 3.10]{AnzengruberRamlau10}} 
and \textbf{\cite{AnzengruberRamlau11}},
we use the following set notations
in the theorem formulations that are necessary
to establish the error estimation between the regularized solution
$\varphi_{\alpha(\delta , f^{\delta})}^{\delta}$ and 
the $J$-minimizing solution $\varphi^{\dagger}$ 
respectively for the operator equation (\ref{operator_problem}) 
and for the minimization problem (\ref{problem0}),
\bea
\label{MDP1}
\overline{S} & := & \left\{ \alpha : \norm{\cT\varphi_{\alpha(\delta , f^{\delta})}^{\delta} - f^{\delta}}_{\cH} 
\leq \overline{\tau}\delta \mbox{ for some } 
\varphi_{\alpha}^{\delta} \in \argmin_{\varphi \in \cV} \{ F_{\alpha}(\varphi , f^{\delta})\} \right\} ,
\\
\label{MDP2}
\underline{S} & := & \left\{ \alpha : \underline{\tau}\delta \leq 
\norm{\cT\varphi_{\alpha(\delta , f^{\delta})}^{\delta} - f^{\delta}}_{\cH} \mbox{ for some } 
\varphi_{\alpha}^{\delta} \in \argmin_{\varphi \in \cV} \{ F_{\alpha}(\varphi , f^{\delta})\} \right\} ,
\eea
where the discrepancy set radii 
$1 < \underline{\tau} \leq \overline{\tau} < \infty$ are fixed.
Analogously, also as well known from
\textbf{\cite[Eq. (4.57) and (4.58)]{Engl96}} and
\textbf{\cite[Definition 2.3]{Kirsch11}},
we are interested in such a regularization parameter $\alpha(\delta , f^{\delta}),$ 
with some fixed discrepancy set radii 
$1 < \underline{\tau} \leq \overline{\tau} < \infty ,$ that
\beq
\label{discrepancy_pr_definition}
\alpha(\delta , f^{\delta}) \in \{ \alpha > 0 \mbox{ }\vert \mbox{ }
\underline{\tau}\delta \leq \norm{\cT\varphi_{\alpha(\delta , f^{\delta})}^{\delta} - f^{\delta}}_{\cH} \leq \overline{\tau}\delta \} 
= \overline{S} \cap \underline{S} \mbox{ for the given } (\delta , f^{\delta}).
\eeq
It is also the immediate consequences of MDP that 
the following estimations
\bea
\label{consequence_MDP}
\norm{\cT\varphi_{\alpha(\delta , f^{\delta})}^{\delta} - \cT\varphi^{\dagger}}_{\cH} \leq (\overline{\tau} + 1)\delta,
\\
\label{consequence_MDP2}
(\underline{\tau} - 1)\delta \leq \norm{\cT\varphi_{\alpha(\delta , f^{\delta})}^{\delta} - \cT\varphi^{\dagger}}_{\cH},
\eea
hold true. Furthermore, according to \textbf{\cite[Corollary 2]{HofmannMathe12}},
the regularization parameter 
$\alpha(\delta , f^{\delta}) \in \underline{S}$ can be bounded
below by,
\bea
\label{lower_bound_regpar}
\alpha(\delta , f^{\delta}) \geq 
\frac{1}{4}\frac{\underline{\tau}^2 - 1}{\underline{\tau}^2 + 1}
\frac{\delta^2}{\Psi((\underline{\tau} - 1)\delta)},
\eea
where $\Psi$ is a concave, positive definite index function. 
A new lower bound depending on this index function 
for the regularization parameter will be developed.
With a stable lower bound for $\alpha(\delta , f^{\delta}),$ 
possible singularity is avoided as $\alpha \rightarrow 0,$ {\em e.g.} 
see Lemma \ref{lemma_J_diff1} and Lemma \ref{lemma_J_diff2}.


\subsection{Generalized variational source condition verification}
\label{variational_ineq}

Convergence rates results for some general operator $\cT$
can be obtained by formulating {\em variational inequality}
which uses the concept of index functions. A function
$\Psi : [0 , \infty) \rightarrow [0 , \infty)$
is called {\em index function} if it is continuous, 
monotonically increasing and $\Psi(0) = 0.$
VSC plays an important role in the 
development of convergence and convergence rate
results for convex variational regularization strategies. 
Verification of this source condition has recently become popular, 
see \textbf{\cite{HohageWeidling15, HohageWeidling16}}.
We rather associate the conventional VSC 
with the generalized Bregman distance since
the objective functional (\ref{cost_functional}) can 
involve any non-smooth and convex functional $J.$ 

\begin{assump}
\label{assump_conventional_variational_ineq}
\textbf{[Variational Source Condition]}
There exists some constant $\sigma \in (0 , 1]$
and a concave index function $\Psi$ such that
\bea
\label{variational_ineq}
\frac{\sigma}{2} D_{J}(\varphi , \varphi^{\dagger}) \leq  J(\varphi) - J(\varphi^{\dagger})
+ \Psi\left( \norm{\cT\varphi - \cT\varphi^{\dagger}}_{\cH} \right)  \mbox{, for all }
\varphi \in \cV .
\eea 
\end{assump}

Below the necessary condition for the VSC to hold 
and a coefficient determination will be
formulated. The result is applicable for any convex
and smooth/non-smooth penalty term $J$ only
in conjunction with MDP.

\begin{theorem}\label{thrm_VSC_verification}
Consider the choice of the regularization parameter {\em a posteriori}  
$\alpha(\delta , f^{\delta}) \in \overline{S} \cap \underline{S},$
with the given data $f^{\delta} \in \cB_{\delta}(f^{\dagger}),$
for the regularized solution 
$\varphi_{\alpha(\delta , f^{\delta})}^{\delta} \in \cD(F_{\alpha})$
to the problem (\ref{problem0}).
If, for the positive definite, monotonically increasing and concave
index function $\Psi : [0 , \infty) \rightarrow [0 , \infty),$ 
the following condition holds true
\bea
\label{VSC_verification_cond}
\langle p , \varphi^{\dagger} - \varphi_{\alpha(\delta , f^{\delta})}^{\delta} \rangle 
\leq C(\underline{\tau} , \overline{\tau}) \Psi(\delta),
\eea
where $p \in \partial J(\varphi^{\dagger})$ and 
$C(\underline{\tau} , \overline{\tau}) : (1 , \infty) \times [\underline{\tau} , \infty) \rightarrow \R_{+} $
then the $J$-minimizing solution $\varphi^{\dagger},$ for some 
$\tilde{\sigma}(\underline{\tau} , \overline{\tau}) : (1 , \infty) \times [\underline{\tau} , \infty) \rightarrow (0 , 1) ,$
satisfies the VSC as below, 
\bea
\label{_thrmvariational_ineq}
\tilde{\sigma} D_{J}(\varphi_{\alpha(\delta , f^{\delta})}^{\delta} , \varphi^{\dagger}) \leq  
J(\varphi_{\alpha(\delta , f^{\delta})}^{\delta}) - J(\varphi^{\dagger})
+ \Psi\left( \norm{\cT\varphi_{\alpha(\delta , f^{\delta})}^{\delta} - \cT\varphi^{\dagger}}_{\cL^{2}(\cZ)} \right).
\eea 
\end{theorem}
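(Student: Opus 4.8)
We sketch the argument; throughout write $\alpha = \alpha(\delta,f^{\delta})$ and $\varphi_{\alpha}^{\delta} = \varphi_{\alpha(\delta,f^{\delta})}^{\delta}$, and note that (\ref{_thrmvariational_ineq}) is required only at the single reconstruction $\varphi_{\alpha}^{\delta}$ (an \emph{a posteriori}, pointwise statement), not for all $\varphi\in\cV$ as in Assumption \ref{assump_conventional_variational_ineq}, which is what makes it reachable. The plan is to collapse (\ref{_thrmvariational_ineq}) into a one-dimensional inequality. Using the definition (\ref{bregman_divergence_intro}) with $p\in\partial J(\varphi^{\dagger})$,
\[
 J(\varphi_{\alpha}^{\delta})-J(\varphi^{\dagger}) \;=\; D_{J}(\varphi_{\alpha}^{\delta},\varphi^{\dagger}) - \langle p,\varphi^{\dagger}-\varphi_{\alpha}^{\delta}\rangle ,
\]
so that, after cancelling one copy of $D_{J}$, the target (\ref{_thrmvariational_ineq}) is equivalent to
\[
 \langle p,\varphi^{\dagger}-\varphi_{\alpha}^{\delta}\rangle \;\le\; (1-\tilde{\sigma})\,D_{J}(\varphi_{\alpha}^{\delta},\varphi^{\dagger}) + \Psi\!\left(\|\cT\varphi_{\alpha}^{\delta}-\cT\varphi^{\dagger}\|_{\cH}\right).
\]
Everything then reduces to bounding the left side from above and the two terms on the right from below.

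First I would use minimality of $\varphi_{\alpha}^{\delta}$ for $F_{\alpha}(\cdot,f^{\delta})$: comparing the values at $\varphi_{\alpha}^{\delta}$ and $\varphi^{\dagger}$, with $\cT\varphi^{\dagger}=f^{\dagger}$ and $\|f^{\dagger}-f^{\delta}\|_{\cH}\le\delta$, gives $\alpha\big(J(\varphi_{\alpha}^{\delta})-J(\varphi^{\dagger})\big)\le\frac{1}{2}\delta^{2}-\frac{1}{2}\|\cT\varphi_{\alpha}^{\delta}-f^{\delta}\|_{\cH}^{2}$; since $\alpha\in\overline{S}\cap\underline{S}$ enforces $\underline{\tau}\delta\le\|\cT\varphi_{\alpha}^{\delta}-f^{\delta}\|_{\cH}$ with $\underline{\tau}>1$, the right side is strictly negative, so $J(\varphi_{\alpha}^{\delta})\le J(\varphi^{\dagger})$. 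Substituting this back into the Bregman identity shows $D_{J}(\varphi_{\alpha}^{\delta},\varphi^{\dagger})\le\langle p,\varphi^{\dagger}-\varphi_{\alpha}^{\delta}\rangle$, and then the standing hypothesis (\ref{VSC_verification_cond}) gives the key chain
\[
 D_{J}(\varphi_{\alpha}^{\delta},\varphi^{\dagger}) \;\le\; \langle p,\varphi^{\dagger}-\varphi_{\alpha}^{\delta}\rangle \;\le\; C(\underline{\tau},\overline{\tau})\,\Psi(\delta).
\]
For the $\Psi$-term on the right I would combine the elementary MDP consequences (\ref{consequence_MDP})--(\ref{consequence_MDP2}), i.e. $(\underline{\tau}-1)\delta\le\|\cT\varphi_{\alpha}^{\delta}-\cT\varphi^{\dagger}\|_{\cH}$, with monotonicity of $\Psi$ and the concavity bound $\Psi(\lambda t)\ge\min\{1,\lambda\}\Psi(t)$ (immediate from concavity and $\Psi(0)=0$), obtaining $\Psi(\|\cT\varphi_{\alpha}^{\delta}-\cT\varphi^{\dagger}\|_{\cH})\ge\min\{1,\underline{\tau}-1\}\,\Psi(\delta)$.

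Plugging both estimates into the reduced inequality, it suffices that
\[
 C(\underline{\tau},\overline{\tau})\,\Psi(\delta)\;\le\;(1-\tilde{\sigma})\,D_{J}(\varphi_{\alpha}^{\delta},\varphi^{\dagger}) + \min\{1,\underline{\tau}-1\}\,\Psi(\delta),
\]
and I would take the coefficient $\tilde{\sigma}=\tilde{\sigma}(\underline{\tau},\overline{\tau})\in(0,1)$ from this relation, written explicitly as a multivariable function of the discrepancy radii through $C(\underline{\tau},\overline{\tau})$. It is natural to split on the sign of $C(\underline{\tau},\overline{\tau})\Psi(\delta)-\Psi(\|\cT\varphi_{\alpha}^{\delta}-\cT\varphi^{\dagger}\|_{\cH})$: when it is non-positive any $\tilde{\sigma}\in(0,1)$ works, otherwise $\tilde{\sigma}$ is pinned down by the ratio of that deficit to $D_{J}$. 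The lower bound (\ref{lower_bound_regpar}) on $\alpha(\delta,f^{\delta})$, together with the quantitative estimates of Lemmata \ref{lemma_J_diff1}--\ref{lemma_J_diff2}, enters precisely here to keep that ratio bounded away from the singular regime $\alpha\to0$.

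The hard part is exactly this last calibration: the chain above only furnishes an \emph{upper} bound $D_{J}\le C(\underline{\tau},\overline{\tau})\Psi(\delta)$, so making $\tilde{\sigma}$ a genuine constant in $(0,1)$ forces the coefficient-determination step either to certify that the deficit $C(\underline{\tau},\overline{\tau})\Psi(\delta)-\min\{1,\underline{\tau}-1\}\Psi(\delta)$ is non-positive, i.e. that $C(\underline{\tau},\overline{\tau})$ stays below $\min\{1,\underline{\tau}-1\}$, or else to control $D_{J}$ from below, equivalently $J(\varphi^{\dagger})-J(\varphi_{\alpha}^{\delta})=\langle p,\varphi^{\dagger}-\varphi_{\alpha}^{\delta}\rangle-D_{J}$ from above, by $\Psi$ of the image discrepancy. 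Producing $C(\underline{\tau},\overline{\tau})$ and $\tilde{\sigma}(\underline{\tau},\overline{\tau})$ simultaneously explicit and mutually consistent is the delicate bookkeeping; the remaining manipulations are routine convex analysis.
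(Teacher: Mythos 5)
There is a genuine gap here, and you flag it yourself: the coefficient calibration that the theorem is actually about is never carried out, and the route you set up cannot carry it out. After correctly reducing the target to
$\langle p , \varphi^{\dagger} - \varphi_{\alpha(\delta , f^{\delta})}^{\delta} \rangle
\leq (1-\tilde{\sigma})\, D_{J}(\varphi_{\alpha(\delta , f^{\delta})}^{\delta} , \varphi^{\dagger})
+ \Psi\left( \norm{\cT\varphi_{\alpha(\delta , f^{\delta})}^{\delta} - \cT\varphi^{\dagger}}_{\cH} \right)$,
you bound the left side above by $C(\underline{\tau},\overline{\tau})\Psi(\delta)$ and the $\Psi$-term below by $\min\{1,\underline{\tau}-1\}\Psi(\delta)$. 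Since the only lower bound on $D_{J}$ available to you is $D_{J}\geq 0$ (your own chain produces only the \emph{upper} bound $D_{J}\leq C\Psi(\delta)$), closing the inequality forces $C(\underline{\tau},\overline{\tau})\leq\min\{1,\underline{\tau}-1\}$. That is incompatible with the constant the theorem is built around: the paper's choice is $C(\underline{\tau},\overline{\tau})=\overline{\tau}/(\underline{\tau}-1)\geq \underline{\tau}/(\underline{\tau}-1)>1\geq\min\{1,\underline{\tau}-1\}$, so the ``deficit'' you hope to absorb into $(1-\tilde{\sigma})D_{J}$ is strictly positive and uncontrolled. The escape route you sketch --- a lower bound $D_{J}\gtrsim\Psi(\delta)$ --- is precisely the kind of estimate the VSC is meant to deliver as a \emph{conclusion}, not something you may assume; and the lower bound (\ref{lower_bound_regpar}) on $\alpha$ and Lemmata \ref{lemma_J_diff1}--\ref{lemma_J_diff2}, which you invoke at this point, play no role in the paper's proof of this theorem (they come later and, in the case of Lemma \ref{lemma_J_diff2}, actually depend on results downstream of it).

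The missing idea is to run the $\Psi$-comparison in the opposite direction. From (\ref{consequence_MDP2}) one has $\delta\leq\frac{1}{\underline{\tau}-1}\norm{\cT\varphi_{\alpha(\delta , f^{\delta})}^{\delta} - \cT\varphi^{\dagger}}_{\cH}$, so monotonicity and concavity of $\Psi$ (with $\Psi(0)=0$) give the \emph{upper} bound $\Psi(\delta)\leq\frac{\overline{\tau}}{\underline{\tau}-1}\Psi\left(\norm{\cT\varphi_{\alpha(\delta , f^{\delta})}^{\delta} - \cT\varphi^{\dagger}}_{\cH}\right)$. Feeding this into the hypothesis (\ref{VSC_verification_cond}) and combining with the subgradient inequality for $p\in\partial J(\varphi^{\dagger})$ yields
$D_{J}(\varphi_{\alpha(\delta , f^{\delta})}^{\delta} , \varphi^{\dagger})
\leq J(\varphi_{\alpha(\delta , f^{\delta})}^{\delta}) - J(\varphi^{\dagger})
+ \frac{\overline{\tau}}{\underline{\tau}-1}\Psi\left(\norm{\cT\varphi_{\alpha(\delta , f^{\delta})}^{\delta} - \cT\varphi^{\dagger}}_{\cH}\right)$,
i.e.\ the VSC with an oversized coefficient on the $\Psi$-term; the paper then trades that oversized coefficient for the factor $\tilde{\sigma}:=\frac{\underline{\tau}-1}{\overline{\tau}}\in(0,1)$ in front of $D_{J}$, which is exactly where the explicit pair $C(\underline{\tau},\overline{\tau})=\overline{\tau}/(\underline{\tau}-1)$ and $\tilde{\sigma}=1/C$ comes from. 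No lower bound on $D_{J}$ or on $\Psi$ of the image discrepancy is needed anywhere. Your preliminary observations ($J(\varphi_{\alpha(\delta , f^{\delta})}^{\delta})\leq J(\varphi^{\dagger})$ from minimality together with $\underline{\tau}>1$, and $D_{J}\leq C\Psi(\delta)$) are correct but unused in the paper's argument; note, incidentally, that the sign information $J(\varphi_{\alpha(\delta , f^{\delta})}^{\delta})-J(\varphi^{\dagger})\leq 0$ you derive is also what one needs to scrutinize in the paper's own final rescaling step, since multiplying the displayed inequality by $\tilde{\sigma}$ scales the $J$-difference as well.
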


\begin{proof}
Firstly, observe that for the fixed $1 < \underline{\tau} \leq \overline{\tau} < \infty$
discrepancy radii,
\bea
1 < 1 + \frac{1}{\underline{\tau} - 1} = \frac{\underline{\tau}}{\underline{\tau} - 1} \leq 
\frac{\overline{\tau}}{\underline{\tau} - 1},
\nonumber
\eea
which implies
\bea
\frac{\underline{\tau} - 1}{\overline{\tau}} < 1.
\nonumber
\eea
This will be beneficial to the coefficient estimation.
Now, for the monotonically increasing and concave index function 
$\Psi : [0 , \infty) \rightarrow [0 , \infty),$
we can estimate
\bea
\Psi(\delta) \leq ^{\footnotemark} \Psi\left( \frac{1}{\underline{\tau} - 1} 
\norm{\cT\varphi_{\alpha(\delta , f^{\delta})}^{\delta} - \cT\varphi^{\dagger}}_{\cH} \right)
& \leq & ^{\footnotemark} \Psi\left( \frac{\overline{\tau}}{\underline{\tau} - 1} 
\norm{\cT\varphi_{\alpha(\delta , f^{\delta})}^{\delta} - \cT\varphi^{\dagger}}_{\cH} \right)
\nonumber\\
& \leq & ^{\footnotemark} \frac{\overline{\tau}}{\underline{\tau} - 1} 
\Psi\left( \norm{\cT\varphi_{\alpha(\delta , f^{\delta})}^{\delta} - \cT\varphi^{\dagger}}_{\cH} \right),
\nonumber
\eea
\footnotetext[1]{by (\ref{consequence_MDP2})}
\footnotetext[2]{$\Psi$ is monotone increasing}
\footnotetext[3]{by the concavity of $\Psi$}
\noindent holds true. On the other hand, convexity of the penalty term $J$ implies
\bea
J(\varphi^{\dagger}) - J(\varphi_{\alpha(\delta , f^{\delta})}^{\delta}) \leq 
\langle p , \varphi^{\dagger} - \varphi_{\alpha(\delta , f^{\delta})}^{\delta} \rangle
\leq \frac{\overline{\tau}}{\underline{\tau} - 1} 
\Psi\left( \norm{\cT\varphi_{\alpha(\delta , f^{\delta})}^{\delta} - \cT\varphi^{\dagger}}_{\cH} \right),
\eea
where $p \in \partial J(\varphi^{\dagger})$ and 
$C(\underline{\tau} , \overline{\tau}) := \frac{\overline{\tau}}{\underline{\tau} - 1}.$
Adding the $J$-difference 
$J(\varphi_{\alpha(\delta , f^{\delta})}^{\delta}) - J(\varphi^{\dagger})$
and taking into consideration the Bregman distance definition (\ref{bregman_divergence_intro})
provides
\bea
D_{J}(\varphi_{\alpha(\delta , f^{\delta})}^{\delta} , \varphi^{\dagger})
\leq J(\varphi_{\alpha(\delta , f^{\delta})}^{\delta}) - J(\varphi^{\dagger})
+ \frac{\overline{\tau}}{\underline{\tau} - 1} 
\Psi\left( \norm{\cT\varphi_{\alpha(\delta , f^{\delta})}^{\delta} - \cT\varphi^{\dagger}}_{\cH} \right),
\eea
where $1 < \underline{\tau} \leq \overline{\tau} < \infty$ are fixed.
By defining the coefficient 
\bea
\label{VSC_coeff}
\tilde{\sigma} := \frac{\underline{\tau} - 1}{\overline{\tau}} < 1,
\eea
the VSC has been verified.
\end{proof}

\begin{remark}
Note that the coefficient defined by (\ref{VSC_coeff}) does not violate
the conventional coefficient condition in the VSC presented in 
(\ref{variational_ineq}), {\em i.e.} the VSC holds for $\sigma \in (0 , 1].$ 
By defining the coefficient $\tilde{\sigma}$ 
as a multivariable function of discrepancy radii, we only intended to
give a general definition for the coefficient. If one sets 
$\overline{\tau} = \underline{\tau} = \tau^{\ast} ,$ then the coefficient 
$\tilde{\sigma} \equiv \tilde{\sigma}(\tau^{\ast}) : (1 , \infty) \rightarrow (0 , 1)$ 
boils down to a single variable function.
\end{remark}


\section{New Bounds for the Regularization Parameter $\alpha(\delta , f^{\delta})$}
\label{interval_for_regpar}

MDP brings new stable lower and upper bounds for the
regularization parameter.
Let us consider the solution space as a reflexive Banach space 
$\cV = \cL^{2}(\Omega).$ Having motivated by the condition stated in 
(\ref{VSC_verification_cond}), a global estimation for the coefficient $\sigma$ 
as a result of the Bregman distance definition
(\ref{bregman_divergence_intro}) and of the variational source condition
can be derived,
\bea
\label{derivative_to_index}
\frac{\sigma}{2} \langle p , \varphi^{\dagger} - \varphi \rangle \leq \Psi\left( \norm{\cT\varphi - \cT\varphi^{\dagger}}_{\cH} \right)
\mbox{, for } p \in \partial J(\varphi^{\dagger}) \mbox{ and } \varphi \in \cV.
\eea
Furthermore, since the penalty term $J : \cL^{2}(\Omega) \rightarrow \R_{+}$ is convex
then a lower bound, which follows from (\ref{derivative_to_index}), 
for the index function $\Psi$ can be given in terms
of $J$-difference as such
\bea
\label{J-diff_to_index}
\frac{\sigma}{2} \left( J(\varphi^{\dagger}) - J(\varphi) \right) \leq 
\Psi\left( \norm{\cT\varphi - \cT\varphi^{\dagger}}_{\cH} \right)
\eea
On the other hand $\varphi_{\alpha}^{\delta}$ is the minimizer of the
objective functional (\ref{cost_functional}). Thus the estimation (\ref{J-diff_to_index})
reads,
\bea
\frac{\sigma}{4\alpha} \norm{\cT\varphi_{\alpha}^{\delta} - f^{\delta}}_{\cH}^{2}
- \frac{\sigma}{4}\frac{\delta^2}{\alpha} \leq 
\frac{\sigma}{2}\left( J(\varphi^{\dagger}) - J(\varphi_{\alpha}^{\delta}) \right) \leq 
\Psi\left( \norm{\cT\varphi_{\alpha}^{\delta} - \cT\varphi^{\dagger}}_{\cH} \right),
\eea
which implies
\bea
\label{index_func_lower_bound0}
\frac{\sigma}{4\alpha} \norm{\cT\varphi_{\alpha}^{\delta} - f^{\delta}}_{\cH}^{2}
- \frac{\sigma}{4}\frac{\delta^2}{\alpha} \leq 
\Psi\left( \norm{\cT\varphi_{\alpha}^{\delta} - \cT\varphi^{\dagger}}_{\cH} \right).
\eea
A new lower bound for the regularization parameter
will rise from this global estimation. As also well 
known by the literature ({\em e.g} \textbf{\cite{HofmannMathe12}})
lower bound is crucial to control the trade off between 
$\delta$ and $\alpha.$ Unlike in the aforementioned literature,
our lower bound contains the coefficient $\sigma$ from 
(\ref{variational_ineq}) and has a simpler form.

\begin{theorem}\label{new_lower_bound_regpar}
Let the regularization parameter $\alpha = \alpha(\delta , f^{\delta}),$ 
for the minimizer 
$\varphi_{\alpha(\delta , f^{\delta})}^{\delta} \in \cD(F_{\alpha})$ 
of the objective functional $F_{\alpha}$ in (\ref{cost_functional}), 
be chosen according to the discrepancy principle 
$\alpha(\delta , f^{\delta}) \in \overline{S} \cap \underline{S}$ where
the given data $f^{\delta} \in \cB_{\delta}(f^{\dagger}).$ Then this
choice of regularization parameter, for $\sigma \in (0, 1]$ and for the fixed
$1 < \underline{\tau} \leq \overline{\tau} < \infty$ coefficients,
implies the following lower bound for the regularization parameter 
$\alpha(\delta , f^{\delta}),$
\bea
\label{index_func_lower_bound1}
\frac{\sigma}{4}(\underline{\tau} - 1) \frac{\delta^2}{\Psi(\delta)} \leq \alpha(\delta , f^{\delta}).
\eea
\end{theorem}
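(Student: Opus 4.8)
The plan is to start from the global estimate (\ref{index_func_lower_bound0}), which already packages together the minimizing property of $\varphi_{\alpha(\delta , f^{\delta})}^{\delta}$ for $F_{\alpha}(\cdot , f^{\delta})$ and the variational source condition, and to feed into it the two-sided residual control supplied by Morozov's discrepancy principle. Abbreviate $r := \norm{\cT\varphi_{\alpha(\delta , f^{\delta})}^{\delta} - f^{\delta}}_{\cH}$. Since $\alpha(\delta , f^{\delta}) \in \overline{S} \cap \underline{S}$, the lower discrepancy bound in (\ref{discrepancy_pr_definition}) gives $r \geq \underline{\tau}\delta$, and because $\underline{\tau} > 1$ we obtain $r > \delta$; hence the left-hand side of (\ref{index_func_lower_bound0}) can be rewritten as $\frac{\sigma}{4\alpha}\,(r^{2} - \delta^{2}) = \frac{\sigma}{4\alpha}\,(r - \delta)(r + \delta) > 0$.

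Next I would bound the right-hand side of (\ref{index_func_lower_bound0}) from above by something proportional to $r + \delta$. The triangle inequality together with $\norm{f^{\dagger} - f^{\delta}}_{\cH} \leq \delta$ gives $\norm{\cT\varphi_{\alpha(\delta , f^{\delta})}^{\delta} - \cT\varphi^{\dagger}}_{\cH} \leq r + \delta$, so monotonicity of $\Psi$ yields $\Psi\!\left(\norm{\cT\varphi_{\alpha(\delta , f^{\delta})}^{\delta} - \cT\varphi^{\dagger}}_{\cH}\right) \leq \Psi(r + \delta)$. Concavity of $\Psi$ together with $\Psi(0) = 0$ implies $\Psi(\mu t) \leq \mu\,\Psi(t)$ for every $\mu \geq 1$; applying this with $t = \delta$ and $\mu = (r + \delta)/\delta \geq 1$ gives $\Psi(r + \delta) \leq \frac{r + \delta}{\delta}\,\Psi(\delta)$. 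Positive definiteness of $\Psi$ ensures $\Psi(\delta) > 0$ for $\delta > 0$, so the divisions below are legitimate (the case $\delta = 0$ being trivial).

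Combining the two steps, (\ref{index_func_lower_bound0}) becomes $\frac{\sigma}{4\alpha}\,(r - \delta)(r + \delta) \leq \frac{r + \delta}{\delta}\,\Psi(\delta)$. Cancelling the common positive factor $r + \delta$ leaves $\frac{\sigma}{4\alpha}\,(r - \delta) \leq \frac{\Psi(\delta)}{\delta}$, that is, $\alpha(\delta , f^{\delta}) \geq \frac{\sigma\,\delta\,(r - \delta)}{4\,\Psi(\delta)}$. Finally, inserting once more $r \geq \underline{\tau}\delta$, so that $r - \delta \geq (\underline{\tau} - 1)\delta$, produces exactly $\alpha(\delta , f^{\delta}) \geq \frac{\sigma}{4}\,(\underline{\tau} - 1)\,\frac{\delta^{2}}{\Psi(\delta)}$, which is (\ref{index_func_lower_bound1}).

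The one step requiring care is this cancellation: the quadratic residual term in (\ref{index_func_lower_bound0}) generates the factor $r + \delta$, and the point is to match it against the \emph{same} factor coming from the linearized concavity bound $\Psi(r + \delta) \leq \frac{r + \delta}{\delta}\,\Psi(\delta)$, so that it disappears. If instead one used the blunter estimate $\Psi\!\left(\norm{\cT\varphi_{\alpha(\delta , f^{\delta})}^{\delta} - \cT\varphi^{\dagger}}_{\cH}\right) \leq \Psi((\overline{\tau} + 1)\delta) \leq (\overline{\tau} + 1)\,\Psi(\delta)$ via (\ref{consequence_MDP}), one would be left with the weaker constant $\frac{\sigma(\underline{\tau}^{2} - 1)}{4(\overline{\tau} + 1)}$ in place of $\frac{\sigma(\underline{\tau} - 1)}{4}$; keeping $r + \delta$ symbolic until the cancellation is what yields the clean bound in which $\overline{\tau}$ does not appear. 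All remaining ingredients --- the residual inequalities (\ref{consequence_MDP})--(\ref{consequence_MDP2}) for the discrepancy principle and the monotonicity/concavity bookkeeping for the index function --- are routine.
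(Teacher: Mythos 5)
Your proof is correct, but it takes a genuinely different and in fact sharper route than the paper's. The paper also starts from (\ref{index_func_lower_bound0}), but it immediately substitutes the discrepancy bounds on both sides separately: the residual on the left via $\underline{\tau}\delta \le \norm{\cT\varphi_{\alpha}^{\delta}-f^{\delta}}_{\cH}$, and the right-hand side via (\ref{consequence_MDP}) plus concavity, giving $\Psi\left(\norm{\cT\varphi_{\alpha}^{\delta}-\cT\varphi^{\dagger}}_{\cH}\right) \le (\overline{\tau}+1)\Psi(\delta)$. This produces $\frac{\sigma}{4}\frac{\underline{\tau}^{2}-1}{\overline{\tau}+1}\frac{\delta^{2}}{\alpha} \le \Psi(\delta)$, after which the paper writes $\frac{\underline{\tau}^{2}-1}{\overline{\tau}+1} = \underline{\tau}-1$ --- an identity that holds only when $\underline{\tau}=\overline{\tau}$; for $\overline{\tau}>\underline{\tau}$ the paper's chain actually yields only the weaker constant $\frac{\sigma}{4}\frac{\underline{\tau}^{2}-1}{\overline{\tau}+1}$. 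Your factorization of the left-hand side as $\frac{\sigma}{4\alpha}(r-\delta)(r+\delta)$, combined with the triangle-inequality bound $\norm{\cT\varphi_{\alpha}^{\delta}-\cT\varphi^{\dagger}}_{\cH}\le r+\delta$ and the linearized concavity estimate $\Psi(r+\delta)\le\frac{r+\delta}{\delta}\Psi(\delta)$, makes the factor $r+\delta$ cancel exactly, so the constant $\frac{\sigma}{4}(\underline{\tau}-1)$ emerges cleanly and independently of $\overline{\tau}$: your argument proves the theorem as stated in full generality, whereas the paper's own computation strictly justifies it only for $\underline{\tau}=\overline{\tau}$, a point you correctly diagnosed in your closing remark. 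The one caveat, shared by both arguments, is that (\ref{index_func_lower_bound0}) itself rests on the variational source condition, which the theorem statement does not list explicitly among its hypotheses.
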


\begin{proof}
If the regularization parameter is chosen
according to the discrepancy principle 
$\alpha(\delta , f^{\delta}) \in \overline{S} \cap \underline{S}$ where
the given data $f^{\delta} \in \cB_{\delta}(f^{\dagger}),$ then 
it follows from (\ref{index_func_lower_bound0}) that
\bea
\frac{\sigma\underline{\tau}\delta^2}{4\alpha(\delta , f^{\delta})} - \frac{\sigma}{4}\frac{\delta^2}{\alpha(\delta , f^{\delta})} \leq 
\Psi\left( \norm{\cT\varphi_{\alpha(\delta , f^{\delta})}^{\delta} - \cT\varphi^{\dagger}}_{\cH} \right).
\eea
Recall that the index function $\Psi$ is a concave function. We then, from (\ref{consequence_MDP}),
conclude that
\bea
\label{index_func_lower_bound2}
\frac{\sigma}{4}\frac{(\underline{\tau}^2 - 1)}{(\overline{\tau} + 1)}\frac{\delta^2}{\alpha(\delta , f^{\delta})} 
= \frac{\sigma}{4}(\underline{\tau} - 1) \frac{\delta^2}{\alpha(\delta , f^{\delta})} \leq \Psi(\delta)
\mbox{, for } 1 < \underline{\tau} \leq \overline{\tau} < \infty .
\eea
\end{proof}

%
%


From the assertion above, a stable lower bound for the index function 
can also be obtained. However, this rises the question of
a stable maximum value of the regularization parameter.
Regardless of the choice of regularization
parameter, there exists some $\delta_{\mathrm{max}} > \delta$
such that $\alpha < \alpha_{\mathrm{max}} = \alpha(\delta_{\mathrm{max}}).$
Analogous to
\textbf{\cite[Eq (3.2) of Proposition 3.1]{HofmannYamamoto10}}, 
we will estimate an improvised form of this maximum value $\alpha_{\mathrm{max}}$
in consideration of the introduced lower bound (\ref{index_func_lower_bound1}).

\begin{theorem}\label{thrm_regpar_max}
Provided that the regularization parameter 
$\alpha(\delta , f^{\delta}) \in \underline{S} \cap \overline{S}$
for the regularized solution $\varphi_{\alpha(\delta , f^{\delta})}^{\delta}$
of the problem (\ref{problem0}) and $\varphi^{\dagger}$ is the 
$J$-minimizing solution (\ref{J-min_def}), then there can be 
defined a maximum value for the regularization parameter 
depending on the positive definite and concave index function
$\Psi : [0 , \infty) \rightarrow [0 , \infty),$
\bea
\label{upper_bound_regpar}
\alpha_{\mathrm{max}} := \left( \cO(\Psi(\delta)) + J(\varphi^{\dagger}) \right)^{-1}
\eea
such that $F_{\alpha_{\mathrm{max}}} < \infty.$
\end{theorem}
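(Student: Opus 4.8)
The plan is to read the conclusion ``$F_{\alpha_{\mathrm{max}}} < \infty$'' as the assertion that the minimal value $F_{\alpha_{\mathrm{max}}}(\varphi_{\alpha_{\mathrm{max}}}^{\delta} , f^{\delta})$ of the objective functional at the regularized solution is finite, and to prove it by using $\varphi^{\dagger}$ itself as a comparison element. First I would check that $\alpha_{\mathrm{max}}$, as written, is a genuine positive real number: because $\varphi^{\dagger}$ is the $J$-minimizing solution of $\cT\varphi = f^{\dagger}$ and that constraint set is nonempty, Definition~\ref{J-min_def} gives $J(\varphi^{\dagger}) < \infty$; and because $\Psi$ is a positive definite index function, $\Psi(\delta) > 0$ for every $\delta > 0$, so the denominator $\cO(\Psi(\delta)) + J(\varphi^{\dagger})$ is strictly positive and finite. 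Hence $\alpha_{\mathrm{max}} \in (0 , \infty)$ and the defining formula is meaningful.

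The core estimate uses minimality of $\varphi_{\alpha_{\mathrm{max}}}^{\delta}$ together with $\cT\varphi^{\dagger} = f^{\dagger}$ and $\norm{f^{\dagger} - f^{\delta}}_{\cH} \leq \delta$:
\bea
F_{\alpha_{\mathrm{max}}}(\varphi_{\alpha_{\mathrm{max}}}^{\delta} , f^{\delta})
& \leq & F_{\alpha_{\mathrm{max}}}(\varphi^{\dagger} , f^{\delta})
= \frac{1}{2}\norm{f^{\dagger} - f^{\delta}}_{\cH}^{2} + \alpha_{\mathrm{max}} J(\varphi^{\dagger})
\nonumber\\
& \leq & \frac{1}{2}\delta^{2} + \alpha_{\mathrm{max}} J(\varphi^{\dagger}).
\nonumber
\eea
By the very definition of $\alpha_{\mathrm{max}}$ one has $\alpha_{\mathrm{max}} J(\varphi^{\dagger}) = \frac{J(\varphi^{\dagger})}{\cO(\Psi(\delta)) + J(\varphi^{\dagger})} \leq 1$, and since $\Psi$ is concave with $\Psi(0) = 0$ it satisfies $\Psi(\delta) \geq \delta\,\Psi(1)$ for $\delta \in (0,1]$, hence $\frac{1}{2}\delta^{2} \leq \frac{1}{2\Psi(1)}\Psi(\delta) = \cO(\Psi(\delta))$ for all sufficiently small $\delta$. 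Combining these, $F_{\alpha_{\mathrm{max}}}(\varphi_{\alpha_{\mathrm{max}}}^{\delta} , f^{\delta}) \leq \cO(\Psi(\delta)) + 1 < \infty$, which is exactly the claim; moreover the same chain exhibits $\alpha_{\mathrm{max}}$ as precisely the largest $\alpha$ for which the comparison bound $F_{\alpha}(\varphi^{\dagger} , f^{\delta}) \leq \cO(\Psi(\delta)) + 1$ is forced by this argument, which is what makes the label ``maximum'' appropriate.

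What remains, and what I expect to be the genuine difficulty, is to tie $\alpha_{\mathrm{max}}$ to the Morozov parameter, i.e.\ to justify $\alpha(\delta , f^{\delta}) < \alpha_{\mathrm{max}}$. The discrepancy principle by itself yields only the \emph{lower} bound (\ref{index_func_lower_bound1}) on $\alpha(\delta , f^{\delta})$, not an upper one, so here I would argue as in \cite{HofmannYamamoto10}: admissibility of Morozov's rule forces $\alpha(\delta , f^{\delta}) \to 0$ as $\delta \to 0$ (cf.\ (\ref{regularization_strategy})), whereas $\alpha_{\mathrm{max}} \to 1/J(\varphi^{\dagger}) > 0$; consequently there is a level $\delta_{\mathrm{max}} > \delta$ with $\alpha(\delta , f^{\delta}) < \alpha(\delta_{\mathrm{max}}) =: \alpha_{\mathrm{max}}$, and evaluating (\ref{index_func_lower_bound1}) at $\delta_{\mathrm{max}}$ confirms that $\alpha_{\mathrm{max}}$ still lies above the trade-off threshold $\frac{\sigma}{4}(\underline{\tau} - 1)\,\delta_{\mathrm{max}}^{2}/\Psi(\delta_{\mathrm{max}})$. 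The honest point is that this last step rests on monotonicity of the parameter choice in the noise level (equivalently, on accepting the $\delta_{\mathrm{max}}$-device); by contrast the literal content of the theorem, the finiteness of $F_{\alpha_{\mathrm{max}}}$, is the easy half and follows immediately from the comparison with $\varphi^{\dagger}$.
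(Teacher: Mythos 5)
Your argument is correct for the literal claim, but it is not the paper's argument, and the difference is substantive. You bound $F_{\alpha_{\mathrm{max}}}$ at its \emph{own} minimizer $\varphi_{\alpha_{\mathrm{max}}}^{\delta}$ by the one-line comparison with $\varphi^{\dagger}$, obtaining $F_{\alpha_{\mathrm{max}}}(\varphi_{\alpha_{\mathrm{max}}}^{\delta} , f^{\delta}) \leq \frac{1}{2}\delta^{2} + \alpha_{\mathrm{max}} J(\varphi^{\dagger}) < \infty$; in that reading the $\Psi(\delta)$-term in the denominator of (\ref{upper_bound_regpar}) is inert (any positive quantity would do), and finiteness would hold for \emph{every} finite $\alpha$, so nothing singles out $\alpha_{\mathrm{max}}$. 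The paper instead evaluates $F_{\alpha_{\mathrm{max}}}$ at the MDP-regularized solution $\varphi_{\alpha}^{\delta}$, i.e.\ at the minimizer of $F_{\alpha}$ for the \emph{a posteriori} parameter $\alpha = \alpha(\delta , f^{\delta})$: after inserting $\pm\,\alpha J(\varphi_{\alpha}^{\delta})$, using $F_{\alpha}(\varphi_{\alpha}^{\delta}) \leq F_{\alpha}(\varphi^{\dagger})$, the bound $\alpha J(\varphi_{\alpha}^{\delta}) \leq \frac{\delta^{2}}{2} + \alpha J(\varphi^{\dagger})$ and the ratio $\alpha_{\mathrm{max}}/\alpha \geq 1$, it arrives at $F_{\alpha_{\mathrm{max}}}(\varphi_{\alpha}^{\delta} , f^{\delta}) \leq \alpha_{\mathrm{max}}\bigl( 2\delta^{2}/\alpha + J(\varphi^{\dagger}) \bigr)$. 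Only then does the lower bound (\ref{index_func_lower_bound1}) of Theorem \ref{new_lower_bound_regpar} enter, converting $\delta^{2}/\alpha$ into a multiple of $\Psi(\delta)$, and $\alpha_{\mathrm{max}}$ is reverse-engineered as the reciprocal of the resulting bracket so that $F_{\alpha_{\mathrm{max}}}(\varphi_{\alpha}^{\delta} , f^{\delta}) \leq 1$. That is precisely what ties the explicit formula (\ref{max_val_def_regpar}) to Morozov's principle and what Corollary \ref{cor_index_func_lower_bound} subsequently feeds on; your proof, by never invoking Theorem \ref{new_lower_bound_regpar}, proves a weaker statement that loses this connection. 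Your closing concern --- that nothing establishes $\alpha(\delta , f^{\delta}) \leq \alpha_{\mathrm{max}}$ --- is well taken: the paper does not prove it either, but silently assumes $\alpha_{\mathrm{max}}/\alpha \geq 1$ inside its own proof, so on that point your account is the more honest of the two.
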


\begin{proof}
Let us consider the following 
form of the objective functional
\bea
\label{max_objective_func}
F_{\alpha_{\mathrm{max}}}(\varphi_{\alpha}^{\delta} , f^{\delta}) = 
\frac{1}{2} \norm{\cT\varphi_{\alpha}^{\delta} - f^{\delta}}_{\cH}^2 + 
\alpha_{\mathrm{max}} J(\varphi_{\alpha}^{\delta}) \mbox{, for some } \alpha > 0.
\eea
It follows from here, for some $\alpha > 0,$ that
\bea
F_{\alpha_{\mathrm{max}}}(\varphi_{\alpha}^{\delta} , f^{\delta}) & \leq & 
\frac{1}{2}\norm{\cT\varphi_{\alpha}^{\delta} - f^{\delta}}_{\cH}^2 + \delta^2 +
\alpha_{\mathrm{max}} J(\varphi_{\alpha}^{\delta})
\nonumber\\
& = & F_{\alpha}(\varphi_{\alpha}^{\delta} , f^{\delta}) + 
(\alpha_{\mathrm{max}} - \alpha) J(\varphi_{\alpha}^{\delta}) + \delta^2 
\nonumber\\
& \leq & F_{\alpha}(\varphi^{\dagger} , f^{\delta}) + 
(\alpha_{\mathrm{max}} - \alpha) J(\varphi_{\alpha}^{\delta}) + \delta^2
\nonumber\\
& \leq & \frac{\delta^2}{2} + \alpha J(\varphi^{\dagger}) +
(\alpha_{\mathrm{max}} - \alpha) J(\varphi_{\alpha}^{\delta}) + \delta^2
\nonumber
\eea
Since $\varphi_{\alpha}^{\delta} \in \cD(F_{\alpha}),$
then $\alpha J(\varphi_{\alpha}^{\delta}) \leq \frac{\delta^2}{2} + \alpha J(\varphi^{\dagger})$ 
for some $\alpha > 0.$ Furthermore $\frac{\alpha_{\mathrm{max}}}{\alpha} \geq 1.$
Thus, these facts yield that
\bea
F_{\alpha_{\mathrm{max}}}(\varphi_{\alpha}^{\delta} , f^{\delta}) & \leq &
\frac{\delta^2}{2} + \alpha J(\varphi^{\dagger}) + 
(\alpha_{\mathrm{max}} - \alpha) J(\varphi^{\dagger}) + \frac{3\delta^2}{2}
\nonumber\\
& = & 2\delta^2 + \alpha_{\mathrm{max}} J(\varphi^{\dagger})
\nonumber\\
& \leq & 2\delta^2 \frac{\alpha_{\mathrm{max}}}{\alpha} + \alpha_{\mathrm{max}} J(\varphi^{\dagger})
\nonumber\\
& = & \alpha_{\mathrm{max}} \left( 2\frac{\delta^2}{\alpha} + J(\varphi^{\dagger}) \right) 
\mbox{, for some } \alpha > 0.
\nonumber
\eea
We proceed with this estimation by making use of the 
lower bound estimated in (\ref{index_func_lower_bound1})
as such,
\bea
F_{\alpha_{\mathrm{max}}}(\varphi_{\alpha}^{\delta} , f^{\delta}) 
\leq \alpha_{\mathrm{max}}\left( \frac{8}{\sigma}(\underline{\tau} - 1)\Psi(\delta) 
+ J(\varphi^{\dagger}) \right).
\nonumber
\eea
The result is hence obtained by defining 
\bea
\label{max_val_def_regpar}
\alpha_{\mathrm{max}} := \left( \frac{8}{\sigma}(\underline{\tau} - 1)\Psi(\delta) 
+ J(\varphi^{\dagger}) \right)^{-1}.
\eea
\end{proof}

As has been mentioned above, a stable maximum value
for the regularization parameter $\alpha(\delta , f^{\delta})$
yields a stable lower bound for the index function $\Psi$
owing to (\ref{index_func_lower_bound2}).
We, then, close this section with providing the following corollary.

\begin{cor}\label{cor_index_func_lower_bound}
If one plugs the maximum value for the regularization parameter
explicitly defined by (\ref{max_val_def_regpar}) into
(\ref{index_func_lower_bound1}), then one obtains
\bea
\frac{\sigma}{4}\left( \frac{8}{\sigma}(\underline{\tau} - 1)\Psi(\delta) 
+ J(\varphi^{\dagger}) \right)\delta^2 \leq \Psi(\delta).
\nonumber
\eea
Since $\frac{8}{\sigma}(\underline{\tau} - 1)\Psi(\delta) \geq 0,$ 
then a simpler form of the lower bound can be given by
\bea
\label{index_func_lower_bound2}
\frac{\sigma}{4} J(\varphi^{\dagger}) \delta^2 \leq \Psi(\delta).
\eea
%

\end{cor}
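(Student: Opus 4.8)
The plan is to read this corollary as an elementary algebraic rearrangement of the two results immediately preceding it: the lower bound (\ref{index_func_lower_bound1}) of Theorem \ref{new_lower_bound_regpar} and the explicit value of $\alpha_{\mathrm{max}}$ in (\ref{max_val_def_regpar}) of Theorem \ref{thrm_regpar_max}, combined with the ordering $\alpha(\delta , f^{\delta}) \leq \alpha_{\mathrm{max}}$ recorded in the paragraph preceding Theorem \ref{thrm_regpar_max}.

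Concretely, I would first restate (\ref{index_func_lower_bound1}) in its cross-multiplied form
\[
\frac{\sigma}{4}(\underline{\tau} - 1)\,\frac{\delta^{2}}{\alpha(\delta , f^{\delta})} \leq \Psi(\delta),
\]
which is permissible because $\alpha(\delta , f^{\delta}) > 0$ and $\Psi(\delta) > 0$ for $\delta > 0$. Since $\alpha(\delta , f^{\delta}) \leq \alpha_{\mathrm{max}}$ and $t \mapsto 1/t$ is decreasing on $(0,\infty)$, enlarging the denominator to $\alpha_{\mathrm{max}}$ only shrinks the left-hand side, so
\[
\frac{\sigma}{4}(\underline{\tau} - 1)\,\frac{\delta^{2}}{\alpha_{\mathrm{max}}} \leq \frac{\sigma}{4}(\underline{\tau} - 1)\,\frac{\delta^{2}}{\alpha(\delta , f^{\delta})} \leq \Psi(\delta).
\]
Substituting $\alpha_{\mathrm{max}}^{-1} = \frac{8}{\sigma}(\underline{\tau} - 1)\Psi(\delta) + J(\varphi^{\dagger})$ from (\ref{max_val_def_regpar}) and collecting the $\delta^{2}$ factor then produces the first displayed inequality of the statement.

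For the second inequality, I would merely discard the nonnegative summand: $\frac{8}{\sigma}(\underline{\tau} - 1)\Psi(\delta) \geq 0$ because $\Psi \geq 0$, $\underline{\tau} > 1$ and $\sigma \in (0,1]$, so removing it from the bracket on the left only decreases the left-hand side, leaving $\frac{\sigma}{4} J(\varphi^{\dagger})\delta^{2} \leq \Psi(\delta)$.

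I do not anticipate a genuine difficulty here: once $\alpha(\delta , f^{\delta}) \leq \alpha_{\mathrm{max}}$ is available, the rest is just monotonicity of the reciprocal and multiplication by the positive quantities $\delta^{2}$ and $\Psi(\delta)$. The single step worth spelling out, rather than being pure arithmetic, is the comparison $\alpha(\delta , f^{\delta}) \leq \alpha_{\mathrm{max}}$ itself — this is precisely the content of Theorem \ref{thrm_regpar_max} together with the remark that there is a $\delta_{\mathrm{max}} > \delta$ with $\alpha(\delta , f^{\delta}) < \alpha_{\mathrm{max}} = \alpha(\delta_{\mathrm{max}})$ — and the only bookkeeping issue is keeping the inequality in the correct direction when $\alpha$ is pushed into a denominator.
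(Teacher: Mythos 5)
Your proposal is correct and follows the same route as the paper: the corollary is nothing more than the substitution of $\alpha_{\mathrm{max}}$ from (\ref{max_val_def_regpar}) into the lower bound (\ref{index_func_lower_bound1}), and you are in fact more careful than the text in justifying that step via $\alpha(\delta,f^{\delta})\leq\alpha_{\mathrm{max}}$ and the monotonicity of $t\mapsto 1/t$. One caveat: what your (correct) computation actually yields is $\frac{\sigma}{4}(\underline{\tau}-1)\bigl(\frac{8}{\sigma}(\underline{\tau}-1)\Psi(\delta)+J(\varphi^{\dagger})\bigr)\delta^{2}\leq\Psi(\delta)$, carrying the factor $(\underline{\tau}-1)$ inherited from (\ref{index_func_lower_bound1}); the paper's first display silently drops this factor, and since $\underline{\tau}-1$ may be smaller than $1$ the displayed form does not literally follow, so your claim that the substitution "produces the first displayed inequality" holds only up to this (apparently typographical) omission, which propagates to the final bound as $\frac{\sigma}{4}(\underline{\tau}-1)J(\varphi^{\dagger})\delta^{2}\leq\Psi(\delta)$.
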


\section{Contribution of the VSC to Stabilize the Bregman Distance}
\label{convergence_by_VSC}



As has been motivated above in the subsection \ref{choice_of_regpar},
our choice of regularization parameter must fulfill (\ref{regularization_strategy}).
Moving on fom here and together with (\ref{new_lower_bound_regpar}), 
we will obtain stable upper bounds for the Bregman distance 
$D_J$, or for the total error value functional $E,$ see (\ref{total_error_bregman}).
We will also see that it is also possible to bound the reverse
Bregman distance $D_{J}(\varphi^{\dagger} , \varphi).$ 
With this upper bound, we will eventually arrive at the quantitative 
estimation for the symmetric Bregman
distance $D_{J}^{\mathrm{sym}}.$
Therefore, the important question to be answered is how to control
the trade-off between the noise amount $\delta$ and the regularization
parameter $\alpha.$ It will be observed that this controllability
is only possible when the choice of the regularization
parameter is specified which is Morozov's discrepancy principle in our case.
As a result of this choice and of the inclusion of the VSC, 
the quantitave estimations for the Bregmans distance
depend on the discrepancy set radii and the coefficient in the VSC.
In this section, the function space of the measured data
will be taken as $\cL^2(\cZ)$ where $\cZ = \cD(f^{\delta}).$

\begin{lemma}
\label{lemma_J_diff1}
Let the regularization parameter $\alpha = \alpha(\delta , f^{\delta}),$ 
for the minimizer 
$\varphi_{\alpha(\delta , f^{\delta})}^{\delta} \in \cD(F_{\alpha})$ 
of the objective functional $F_{\alpha}$ in (\ref{cost_functional}), 
be chosen according to the disrepancy principle 
$\alpha(\delta , f^{\delta}) \in \overline{S} \cap \underline{S}$ where
the given data $f^{\delta} \in \cB_{\delta}(f^{\dagger}).$
Furthermore, suppose that the $J$-minimizing solution obeys the VSC 
(\ref{variational_ineq}). Then, this {\em a posteriori} rule for the 
choice of the regularization parameter stabilises the following $J$-difference
\bea
\label{J_diff1}
J(\varphi_{\alpha(\delta , f^{\delta})}^{\delta}) - J(\varphi^{\dagger}) = \cO(\Psi(\delta)) .
\eea
\end{lemma}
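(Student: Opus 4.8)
The plan is to obtain matching upper and lower bounds on the $J$-difference $J(\varphi_{\alpha(\delta , f^{\delta})}^{\delta}) - J(\varphi^{\dagger})$, each of order $\cO(\Psi(\delta))$. For the \emph{lower} bound I would invoke convexity of $J$ directly: with $p \in \partial J(\varphi^{\dagger})$ one has $J(\varphi_{\alpha(\delta , f^{\delta})}^{\delta}) - J(\varphi^{\dagger}) \geq \langle p , \varphi_{\alpha(\delta , f^{\delta})}^{\delta} - \varphi^{\dagger} \rangle = - \langle p , \varphi^{\dagger} - \varphi_{\alpha(\delta , f^{\delta})}^{\delta} \rangle$. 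The term on the right was already controlled in the proof of Theorem~\ref{thrm_VSC_verification}, where it was shown (via the concavity and monotonicity of $\Psi$ together with the two MDP consequences (\ref{consequence_MDP}) and (\ref{consequence_MDP2})) that $\langle p , \varphi^{\dagger} - \varphi_{\alpha(\delta , f^{\delta})}^{\delta} \rangle \leq \frac{\overline{\tau}}{\underline{\tau} - 1}\Psi\!\left( \norm{\cT\varphi_{\alpha(\delta , f^{\delta})}^{\delta} - \cT\varphi^{\dagger}}_{\cH} \right) \leq \frac{\overline{\tau}(\overline{\tau}+1)}{\underline{\tau} - 1}\Psi(\delta)$, using $\norm{\cT\varphi_{\alpha(\delta , f^{\delta})}^{\delta} - \cT\varphi^{\dagger}}_{\cH} \leq (\overline{\tau}+1)\delta$ and monotonicity of $\Psi$. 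Hence $J(\varphi_{\alpha(\delta , f^{\delta})}^{\delta}) - J(\varphi^{\dagger}) \geq - C_1(\underline{\tau},\overline{\tau})\,\Psi(\delta)$.

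**The upper bound.** For the \emph{upper} bound I would use the minimality of $\varphi_{\alpha(\delta , f^{\delta})}^{\delta}$ for $F_{\alpha}(\cdot, f^{\delta})$ together with the noise bound $\norm{\cT\varphi^{\dagger} - f^{\delta}}_{\cH} = \norm{f^{\dagger} - f^{\delta}}_{\cH} \leq \delta$. From $F_{\alpha}(\varphi_{\alpha(\delta , f^{\delta})}^{\delta}, f^{\delta}) \leq F_{\alpha}(\varphi^{\dagger}, f^{\delta})$ one gets
\[
\tfrac{1}{2}\norm{\cT\varphi_{\alpha(\delta , f^{\delta})}^{\delta} - f^{\delta}}_{\cH}^2 + \alpha J(\varphi_{\alpha(\delta , f^{\delta})}^{\delta}) \leq \tfrac{1}{2}\delta^2 + \alpha J(\varphi^{\dagger}),
\]
so that $J(\varphi_{\alpha(\delta , f^{\delta})}^{\delta}) - J(\varphi^{\dagger}) \leq \frac{\delta^2}{2\alpha(\delta , f^{\delta})}$. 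Now I plug in the lower bound on the regularization parameter from Theorem~\ref{new_lower_bound_regpar}, namely $\alpha(\delta , f^{\delta}) \geq \frac{\sigma}{4}(\underline{\tau}-1)\frac{\delta^2}{\Psi(\delta)}$, which gives $\frac{\delta^2}{2\alpha(\delta , f^{\delta})} \leq \frac{2}{\sigma(\underline{\tau}-1)}\Psi(\delta)$. Therefore $J(\varphi_{\alpha(\delta , f^{\delta})}^{\delta}) - J(\varphi^{\dagger}) \leq C_2(\sigma,\underline{\tau})\,\Psi(\delta)$.

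**Conclusion.** Combining the two one-sided estimates yields
\[
\left| J(\varphi_{\alpha(\delta , f^{\delta})}^{\delta}) - J(\varphi^{\dagger}) \right| \leq \max\{C_1(\underline{\tau},\overline{\tau}), C_2(\sigma,\underline{\tau})\}\,\Psi(\delta),
\]
which is precisely $J(\varphi_{\alpha(\delta , f^{\delta})}^{\delta}) - J(\varphi^{\dagger}) = \cO(\Psi(\delta))$, as claimed. The main obstacle here is not any single inequality but making sure the constant from the VSC ($\sigma$) and the discrepancy radii ($\underline{\tau},\overline{\tau}$) enter consistently: in particular the upper-bound argument crucially depends on the \emph{stable} lower bound (\ref{index_func_lower_bound1}) for $\alpha(\delta , f^{\delta})$, so that dividing by $\alpha$ does not blow up as $\delta \to 0$ — this is exactly the role of Theorem~\ref{new_lower_bound_regpar} flagged in the discussion preceding Lemma~\ref{lemma_J_diff1}. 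One should also note that the lower bound on the $J$-difference uses only convexity and the already-established estimate from Theorem~\ref{thrm_VSC_verification}, and does \emph{not} require the VSC itself, whereas the VSC hypothesis is what ties the whole framework together and guarantees $\Psi$ has the right properties; strictly speaking the upper bound needs only (\ref{index_func_lower_bound1}), whose proof invoked $\sigma$ from the VSC.
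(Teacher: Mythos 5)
Your proof is correct and rests on the same core mechanism as the paper's: minimality of $\varphi_{\alpha(\delta,f^{\delta})}^{\delta}$ gives $J(\varphi_{\alpha(\delta,f^{\delta})}^{\delta}) - J(\varphi^{\dagger}) \leq \frac{\delta^2}{2\alpha}$, and a stable lower bound on $\alpha(\delta,f^{\delta})$ then controls the quotient. The one substantive difference is which lower bound you invoke. The paper proves Lemma \ref{lemma_J_diff1} using the Hofmann--Math\'{e} bound (\ref{lower_bound_regpar}), estimating $\frac{\delta^2}{2\alpha} \leq 2\frac{\underline{\tau}^2+1}{\underline{\tau}^2-1}\Psi((\underline{\tau}-1)\delta) \leq 2\frac{\underline{\tau}^2+1}{\underline{\tau}-1}\Psi(\delta)$ via monotonicity and concavity of $\Psi$; you instead use the paper's own bound (\ref{index_func_lower_bound1}) from Theorem \ref{new_lower_bound_regpar}, which is precisely the proof the paper reserves for Lemma \ref{lemma_J_diff2} (the two lemmas have identical conclusions and are distinguished only by which lower bound for $\alpha$ is used, as the paper states between them). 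Your choice is legitimate here because Lemma \ref{lemma_J_diff1} assumes the VSC, which is what Theorem \ref{new_lower_bound_regpar} needs, and it yields the constant $\frac{2}{\sigma(\underline{\tau}-1)}$ in place of $2\frac{\underline{\tau}^2+1}{\underline{\tau}-1}$; but be aware that you have effectively collapsed the paper's two lemmas into one. One smaller point: your lower bound on the $J$-difference cites the estimate $\langle p, \varphi^{\dagger} - \varphi_{\alpha(\delta,f^{\delta})}^{\delta}\rangle \leq C(\underline{\tau},\overline{\tau})\Psi(\cdot)$ as something ``shown'' in Theorem \ref{thrm_VSC_verification}, whereas there it is the hypothesis (\ref{VSC_verification_cond}), not a conclusion; a cleaner route to the same two-sided control is to use the VSC (\ref{variational_ineq}) together with $D_J \geq 0$ to get $J(\varphi^{\dagger}) - J(\varphi_{\alpha(\delta,f^{\delta})}^{\delta}) \leq \Psi\bigl((\overline{\tau}+1)\delta\bigr) \leq (\overline{\tau}+1)\Psi(\delta)$. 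The paper itself only proves the upper bound and reads $\cO(\Psi(\delta))$ one-sidedly, so this extra step, while welcome, is not required to match its claim.
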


\begin{proof}
Since $\varphi_{\alpha}^{\delta} \in \cD(F_{\alpha})$
is the minimizer of the objective functional $F_{\alpha},$ 
for some $\alpha > 0$ and for any
$\varphi \in \cD(F_{\alpha})$ it holds that 
$F_{\alpha}(\varphi_{\alpha}^{\delta}) \leq F_{\alpha}(\varphi).$ This 
implies the following,
\bea
\label{J-diff}
J(\varphi_{\alpha}^{\delta}) - J(\varphi^{\dagger}) \leq \frac{\delta^2}{2\alpha} .
\eea
Here, the decrease in $\alpha$ will cause a blow-up on the right hand side. 
This is controlled by the choice of the regularization parameter 
$\alpha = \alpha(\delta , f^{\delta}) \in \underline{S}.$ Thus, we make use 
of the lower bound for the regularization parameter given in (\ref{lower_bound_regpar})
to have a stable upper bound by using the facts that $\Psi$ is a concave
and increasing function,
\bea
\label{trade_off_control}
\frac{\delta^2}{2\alpha(\delta , f^{\delta})} \leq 2 \frac{\underline{\tau}^2 + 1}{\underline{\tau}^2 - 1}\Psi((\underline{\tau} - 1)\delta)
\leq ^{\footnotemark} 2 \frac{\underline{\tau}^2 + 1}{\underline{\tau}^2 - 1} \Psi((\underline{\tau} + 1)\delta)
\leq ^{\footnotemark} 2 \frac{\underline{\tau}^2 + 1}{\underline{\tau} - 1} \Psi(\delta)
\eea
\footnotetext[1]{Since $\Psi$ is an increasing function, then $\Psi((\underline{\tau} - 1)\delta) \leq \Psi((\underline{\tau} + 1)\delta).$}
\footnotetext[2]{Due to the concavity of $\Psi,$ $\Psi((\underline{\tau} + 1)\delta) \leq (\underline{\tau} + 1)\Psi(\delta)$ holds,
see \textbf{\cite[Eq. 2.3 of Proposition 1]{HofmannMathe12}}.}
\noindent Hence, this control over the trade-off between $\delta^2$ and $\alpha$
yields the desired result.
\end{proof}

The lemma above is comparable to its counterparts in the literature,
\textbf{\cite[Corollary 4.2]{AnzengruberRamlau10}, 
\cite[Lemma 2.8]{AnzengruberRamlau11}, 
\cite[Eq. (2.17)]{AnzengruberHofmannMathe14}, 
\cite[Theorem 4.4]{Grasmair13} 
\cite[Lemma 1]{HofmannMathe12}}. We, below, reformulate the
result with a new proof since a new lower bound for the regularization 
parameter that has been stated in Theorem
\ref{new_lower_bound_regpar} will be included.

\begin{lemma}
\label{lemma_J_diff2}
Let the regularization parameter $\alpha = \alpha(\delta , f^{\delta}),$ 
for the minimizer 
$\varphi_{\alpha(\delta , f^{\delta})}^{\delta} \in \cD(F_{\alpha})$ 
of the objective functional $F_{\alpha}$ in (\ref{cost_functional}), 
be chosen according to the disrepancy principle 
$\alpha(\delta , f^{\delta}) \in \overline{S} \cap \underline{S}$ where
the given data $f^{\delta} \in \cB_{\delta}(f^{\dagger}).$
Then this {\em a posteriori} rule for the choice of the regularization
parameter stabilises the following $J$ difference
\bea
\label{J_diff2}
J(\varphi_{\alpha(\delta , f^{\delta})}^{\delta}) - J(\varphi^{\dagger}) = \cO(\Psi(\delta)) .
\eea
\end{lemma}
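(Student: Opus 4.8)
The plan is to reuse the opening of the proof of Lemma~\ref{lemma_J_diff1} essentially verbatim, and then to substitute the sharper lower bound for the regularization parameter established in Theorem~\ref{new_lower_bound_regpar} in place of the Hofmann--Math\'{e} bound (\ref{lower_bound_regpar}) that was used there. First I would exploit that $\varphi_{\alpha}^{\delta} \in \cD(F_{\alpha})$ minimizes the functional (\ref{cost_functional}): testing $F_{\alpha}(\cdot , f^{\delta})$ at the $J$-minimizing solution $\varphi^{\dagger}$ and using $\norm{\cT\varphi^{\dagger} - f^{\delta}}_{\cH} = \norm{f^{\dagger} - f^{\delta}}_{\cH} \leq \delta$ reproduces the estimate (\ref{J-diff}), namely $J(\varphi_{\alpha}^{\delta}) - J(\varphi^{\dagger}) \leq \delta^2/(2\alpha)$. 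The right-hand side blows up as $\alpha \to 0$, and this is precisely the point at which the a posteriori choice $\alpha(\delta , f^{\delta}) \in \overline{S} \cap \underline{S}$ must be invoked to control the trade-off between $\delta^2$ and $\alpha$.

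Next I would insert the lower bound (\ref{index_func_lower_bound1}), i.e. $\frac{\sigma}{4}(\underline{\tau} - 1)\,\delta^2/\Psi(\delta) \leq \alpha(\delta , f^{\delta})$, which gives immediately
\[
\frac{\delta^2}{2\alpha(\delta , f^{\delta})} \leq \frac{2}{\sigma(\underline{\tau} - 1)}\,\Psi(\delta),
\]
and hence, combining with (\ref{J-diff}), $J(\varphi_{\alpha(\delta , f^{\delta})}^{\delta}) - J(\varphi^{\dagger}) \leq \frac{2}{\sigma(\underline{\tau} - 1)}\Psi(\delta) = \cO(\Psi(\delta))$, as claimed. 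In contrast to Lemma~\ref{lemma_J_diff1}, no further appeal to monotonicity or concavity of $\Psi$ is needed here, because (\ref{index_func_lower_bound1}) already carries the argument $\Psi(\delta)$ rather than $\Psi((\underline{\tau}\pm 1)\delta)$; the payoff of using the new lower bound is therefore a cleaner $\cO$-constant, $\frac{2}{\sigma(\underline{\tau}-1)}$ in place of $2\frac{\underline{\tau}^2+1}{\underline{\tau}-1}$.

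The one step that needs care --- and the only genuine obstacle --- is the admissibility of invoking Theorem~\ref{new_lower_bound_regpar}: its bound (\ref{index_func_lower_bound1}) is derived from the global estimate (\ref{index_func_lower_bound0}), which in turn rests on the VSC (\ref{variational_ineq}) through (\ref{derivative_to_index})--(\ref{J-diff_to_index}). Consequently the constant $\sigma \in (0,1]$ appearing in the final estimate is the VSC coefficient, and the lemma is to be read with the VSC in force, exactly as in Lemma~\ref{lemma_J_diff1}; the genuine difference between the two lemmas is only the route taken and the explicit form of the hidden constant. I would close by noting that the stable lower bound (\ref{index_func_lower_bound1}), unlike (\ref{lower_bound_regpar}), directly prevents the singularity $\alpha \to 0$ in (\ref{J-diff}) and in the later Lemmata, which is the structural reason the argument goes through with such a short proof.
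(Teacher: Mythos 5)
Your proposal matches the paper's proof essentially verbatim: the minimizer property gives $J(\varphi_{\alpha}^{\delta}) - J(\varphi^{\dagger}) \leq \delta^2/(2\alpha)$, and substituting the lower bound (\ref{index_func_lower_bound1}) from Theorem~\ref{new_lower_bound_regpar} yields the constant $\frac{2}{\sigma(\underline{\tau}-1)}$ exactly as in (\ref{trade_off_control2}). Your side remark that the VSC must implicitly be in force (since (\ref{index_func_lower_bound1}) is derived from it) is correct and is in fact a hypothesis the paper's statement of the lemma omits.
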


\begin{proof}
Likewise before, since $F_{\alpha}(\varphi_{\alpha}^{\delta}) \leq F_{\alpha}(\varphi),$
\bea
\label{J-diff3}
J(\varphi_{\alpha}^{\delta}) - J(\varphi^{\dagger}) \leq \frac{\delta^2}{2\alpha} ,
\eea
holds true for some $\alpha > 0.$ The choice of regularization parameter,
as we have seen in Theorem \ref{new_lower_bound_regpar}, provides the stable
lower bound (\ref{index_func_lower_bound1}). Plugging that lower bound
into (\ref{J-diff3}) stabilizes the $J$-difference as such,
\bea
\label{trade_off_control2}
\frac{\delta^2}{2\alpha(\delta , f^{\delta})} \leq \frac{2}{\sigma}\frac{1}{(\underline{\tau} - 1)}\Psi(\delta).
\eea
\end{proof}

Now tight rates for the total error estimation can be established.
We will present two results, one of which is for the usual Bregman distance
and the other one is for its reverse form. These results will inherently
lead to the stable upper bound for the symmetric Bregman distance
that has been defined by (\ref{symmetrical_bregman}).

\begin{theorem}
\label{theorem_Bregman_bound_usual}
Let the $J$-minimizing solution $\varphi^{\dagger} \in \cV$ 
for the operator equation (\ref{operator_problem})
satisfy Assumption \ref{assump_conventional_variational_ineq}.
Under the same conditions in Lemma \ref{lemma_J_diff2}, 
we then have
\bea
\label{theorem_upper_bound_bregman1}
D_{J}(\varphi_{\alpha(\delta , f^{\delta})}^{\delta} , \varphi^{\dagger}) = \cO(\Psi(\delta)) ,
\eea
as $\delta \rightarrow 0.$ 
\end{theorem}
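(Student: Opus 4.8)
The plan is to combine the variational source condition (Assumption~\ref{assump_conventional_variational_ineq}) with the $J$-difference estimate of Lemma~\ref{lemma_J_diff2} and the discrepancy consequence (\ref{consequence_MDP}). The starting point is the VSC written at $\varphi = \varphi_{\alpha(\delta , f^{\delta})}^{\delta}$:
\bea
\frac{\sigma}{2} D_{J}(\varphi_{\alpha(\delta , f^{\delta})}^{\delta} , \varphi^{\dagger}) \leq
J(\varphi_{\alpha(\delta , f^{\delta})}^{\delta}) - J(\varphi^{\dagger})
+ \Psi\left( \norm{\cT\varphi_{\alpha(\delta , f^{\delta})}^{\delta} - \cT\varphi^{\dagger}}_{\cH} \right).
\nonumber
\eea
So it suffices to bound each of the two terms on the right by $\cO(\Psi(\delta))$.

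First I would invoke Lemma~\ref{lemma_J_diff2} directly: under the present hypotheses (which are exactly those of that lemma), $J(\varphi_{\alpha(\delta , f^{\delta})}^{\delta}) - J(\varphi^{\dagger}) = \cO(\Psi(\delta))$. For the second term, I would use the discrepancy consequence (\ref{consequence_MDP}), namely $\norm{\cT\varphi_{\alpha(\delta , f^{\delta})}^{\delta} - \cT\varphi^{\dagger}}_{\cH} \leq (\overline{\tau} + 1)\delta$, together with the monotonicity and concavity of the index function $\Psi$; the same subadditivity-type estimate used in the proof of Lemma~\ref{lemma_J_diff1} (Proposition~1 of \cite{HofmannMathe12}, giving $\Psi(c\delta) \leq c\,\Psi(\delta)$ for $c \geq 1$) yields
\bea
\Psi\left( \norm{\cT\varphi_{\alpha(\delta , f^{\delta})}^{\delta} - \cT\varphi^{\dagger}}_{\cH} \right)
\leq \Psi\left( (\overline{\tau} + 1)\delta \right) \leq (\overline{\tau} + 1)\,\Psi(\delta) = \cO(\Psi(\delta)).
\nonumber
\eea

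Adding the two bounds and dividing by $\sigma/2$ gives
\bea
D_{J}(\varphi_{\alpha(\delta , f^{\delta})}^{\delta} , \varphi^{\dagger})
\leq \frac{2}{\sigma}\left( \cO(\Psi(\delta)) + (\overline{\tau} + 1)\Psi(\delta) \right) = \cO(\Psi(\delta)),
\nonumber
\eea
which is the claim. The constant here is explicit in $\sigma$, $\underline{\tau}$, $\overline{\tau}$, consistent with the paper's emphasis on explicit coefficients. The only genuinely non-routine point is making sure Lemma~\ref{lemma_J_diff2} is applicable with the stated constant; since its hypotheses coincide with ours, this is immediate, and I do not foresee a real obstacle — the argument is a short assembly of already-established estimates.
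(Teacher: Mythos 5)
Your proposal is correct and follows essentially the same route as the paper: apply the VSC at $\varphi = \varphi_{\alpha(\delta,f^{\delta})}^{\delta}$, bound the $J$-difference via Lemma \ref{lemma_J_diff2} and the $\Psi$-term via (\ref{consequence_MDP}) together with concavity of $\Psi$. If anything, you are slightly more careful than the printed proof, which drops the factor $2/\sigma$ coming from the left-hand side of (\ref{variational_ineq}) when passing from $\frac{\sigma}{2}D_J \leq \cdots$ to a bound on $D_J$ itself; your explicit division by $\sigma/2$ fixes that.
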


\begin{proof}
Since the true solution $\varphi^{\dagger}$ satisfies
Assumption \ref{assump_conventional_variational_ineq},
\bea
D_{J}(\varphi_{\alpha(\delta , f^{\delta})}^{\delta} , \varphi^{\dagger}) & \leq & 
J(\varphi_{\alpha(\delta , f^{\delta})}^{\delta}) - J(\varphi^{\dagger}) + 
\Psi\left( \norm{\cT\varphi_{\alpha(\delta , f^{\delta})}^{\delta} - \cT\varphi^{\dagger}}_{\cL^{2}(\cZ)} \right).
\nonumber\\
& \leq & \frac{2}{\sigma}\frac{1}{(\underline{\tau} - 1)}\Psi(\delta) + 
\Psi\left( (\overline{\tau} + 1)\delta \right)
\nonumber\\
&\leq & \frac{2}{\sigma}\frac{1}{(\underline{\tau} - 1)}\Psi(\delta) +
(\overline{\tau} + 1)\Psi(\delta).
\eea
The first term on the right hand side, the bound for the $J$ difference, 
comes from Lemma \ref{lemma_J_diff2}. As in the estimation (\ref{trade_off_control2}) 
of Lemma \ref{lemma_J_diff2}, $\Psi$ is concave function, thus 
$\Psi\left( (\overline{\tau} + 1)\delta \right) \leq (\overline{\tau} + 1) \Psi(\delta).$
\end{proof}

\begin{theorem}
\label{theorem_norm_convergence}
Let the regularization parameter $\alpha = \alpha(\delta , f^{\delta}),$ 
for the minimizer 
$\varphi_{\alpha(\delta , f^{\delta})}^{\delta} \in \cD(F_{\alpha})$ 
of the objective functional $F_{\alpha}$ in (\ref{cost_functional}), 
be chosen according to the disrepancy principle 
$\alpha(\delta , f^{\delta}) \in \overline{S} \cap \underline{S}$ where
the given data $f^{\delta} \in \cB_{\delta}(f^{\dagger}).$
Suppose that the $J-$minimizing solution $\varphi^{\dagger} \in \cV,$ 
where $\cT\varphi^{\dagger} = f^{\dagger},$ satisfies 
Assumption \ref{assump_conventional_variational_ineq}
with the concave and monotonically increasing index function 
$\Psi : [0 , \infty) \rightarrow [0 , \infty) .$
Then, this {\em a posteriori} rule for the choice of regularization
parameter yields the following rate,
\begin{eqnarray}
\label{theorem_upper_bound_bregman}
D_{J}(\varphi^{\dagger} , \varphi_{\alpha(\delta , f^{\delta})}^{\delta}) = \cO(\Psi(\delta)) ,
\end{eqnarray}
as $\delta \rightarrow 0.$ 
\end{theorem}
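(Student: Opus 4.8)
Throughout I write $\alpha := \alpha(\delta,f^{\delta})$ for the parameter chosen by the discrepancy principle. The plan is to realise the reverse Bregman distance through the subgradient furnished by the first-order optimality condition and then split it into a $J$-difference and an inner-product term, each of which will be shown to be $\cO(\Psi(\delta))$. By (\ref{optimality_1}) the element $q := \alpha^{-1}\cT^{\ast}(f^{\delta}-\cT\varphi_{\alpha}^{\delta})$ lies in $\partial J(\varphi_{\alpha}^{\delta})$, and with respect to this subgradient the definition (\ref{bregman_divergence_intro}) gives
\[ D_{J}(\varphi^{\dagger},\varphi_{\alpha}^{\delta}) = \left( J(\varphi^{\dagger}) - J(\varphi_{\alpha}^{\delta}) \right) - \langle q , \varphi^{\dagger} - \varphi_{\alpha}^{\delta} \rangle . \]

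First I would bound the inner-product term. Moving $\cT^{\ast}$ across the pairing, using $\cT\varphi^{\dagger} = f^{\dagger}$ and setting $r := f^{\delta} - \cT\varphi_{\alpha}^{\delta}$, one obtains $-\langle q,\varphi^{\dagger}-\varphi_{\alpha}^{\delta}\rangle = \alpha^{-1}( \langle r, f^{\delta}-f^{\dagger}\rangle - \norm{r}_{\cH}^{2} )$. By Cauchy--Schwarz, the noise bound $\norm{f^{\delta}-f^{\dagger}}_{\cH}\le\delta$ and the lower discrepancy bound $\norm{r}_{\cH}\ge\underline{\tau}\delta$ from (\ref{discrepancy_pr_definition}), the bracket is at most $\norm{r}_{\cH}\delta - \norm{r}_{\cH}^{2} \le -(\underline{\tau}-1)\norm{r}_{\cH}\delta \le 0$, so this term is in fact nonpositive; alternatively, estimating its modulus by $\alpha^{-1}\overline{\tau}(\overline{\tau}+1)\delta^{2}$ via $\norm{r}_{\cH}\le\overline{\tau}\delta$ and then inserting the stable lower bound $\alpha^{-1}\delta^{2} \le \frac{4}{\sigma(\underline{\tau}-1)}\Psi(\delta)$ of Theorem \ref{new_lower_bound_regpar} keeps it of order $\Psi(\delta)$.

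Next I would bound $J(\varphi^{\dagger}) - J(\varphi_{\alpha}^{\delta})$, which points in the direction opposite to the one stabilised in Lemma \ref{lemma_J_diff2} and cannot be obtained from the minimality of $\varphi_{\alpha}^{\delta}$ for $F_{\alpha}$; this is precisely where Assumption \ref{assump_conventional_variational_ineq} enters. From its consequence (\ref{J-diff_to_index}) evaluated at $\varphi = \varphi_{\alpha}^{\delta}$, then the discrepancy estimate (\ref{consequence_MDP}), the monotonicity and the concavity of $\Psi$,
\[ J(\varphi^{\dagger}) - J(\varphi_{\alpha}^{\delta}) \le \frac{2}{\sigma}\Psi\left( \norm{\cT\varphi_{\alpha}^{\delta}-\cT\varphi^{\dagger}}_{\cH} \right) \le \frac{2}{\sigma}\Psi\left( (\overline{\tau}+1)\delta \right) \le \frac{2(\overline{\tau}+1)}{\sigma}\Psi(\delta). \]
Adding the two estimates gives $D_{J}(\varphi^{\dagger},\varphi_{\alpha}^{\delta}) \le C(\underline{\tau},\overline{\tau},\sigma)\Psi(\delta)$, while $D_{J}(\varphi^{\dagger},\varphi_{\alpha}^{\delta})\ge 0$ by convexity of $J$; hence $D_{J}(\varphi^{\dagger},\varphi_{\alpha}^{\delta}) = \cO(\Psi(\delta))$ as $\delta\to 0$, which is the claim. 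Combined with Theorem \ref{theorem_Bregman_bound_usual} it also yields the symmetric rate $D_{J}^{\mathrm{sym}}(\varphi_{\alpha}^{\delta},\varphi^{\dagger}) = \cO(\Psi(\delta))$.

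The main obstacle is exactly this reversed $J$-difference $J(\varphi^{\dagger}) - J(\varphi_{\alpha}^{\delta})$: unlike in Lemma \ref{lemma_J_diff2}, it cannot be read off from $F_{\alpha}(\varphi_{\alpha}^{\delta})\le F_{\alpha}(\varphi^{\dagger})$, so one genuinely has to invoke the variational source condition through (\ref{J-diff_to_index}) in order to bound it, which is why Assumption \ref{assump_conventional_variational_ineq} is hypothesised in the theorem. A secondary, purely bookkeeping point is that the reverse Bregman distance requires fixing one subgradient at $\varphi_{\alpha}^{\delta}$; the optimality subgradient from (\ref{optimality_1}) is the natural choice, and it is precisely what collapses the inner-product term into the residual expression that the discrepancy principle controls.
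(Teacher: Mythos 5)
Your proof is correct and follows essentially the same route as the paper's: decompose $D_{J}(\varphi^{\dagger},\varphi_{\alpha}^{\delta})$ with respect to the optimality subgradient from (\ref{optimality_1}), bound the reversed $J$-difference through the variational source condition together with (\ref{consequence_MDP}) and the concavity of $\Psi$ (your constant $2(\overline{\tau}+1)/\sigma$ versus the paper's $\overline{\tau}+1$ is immaterial for an $\cO(\Psi(\delta))$ statement), and control the inner-product term by Cauchy--Schwarz, the discrepancy bounds, and the lower bound on $\alpha(\delta,f^{\delta})$ from Theorem \ref{new_lower_bound_regpar}. Your aside that the inner-product term $\alpha^{-1}\left(\langle r,f^{\delta}-f^{\dagger}\rangle-\norm{r}_{\cH}^{2}\right)$ is actually nonpositive, thanks to $\norm{r}_{\cH}\geq\underline{\tau}\delta$ with $\underline{\tau}>1$, is a genuine sharpening that the paper does not exploit, though it leaves the stated rate unchanged.
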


\begin{proof}

Firstly, by Assumption \ref{assump_conventional_variational_ineq},
it can easily be observed that,
\bea
J(\varphi^{\dagger}) - J(\varphi_{\alpha(\delta , f^{\delta})}^{\delta}) \leq 
\Psi\left( \norm{\cT\varphi_{\alpha(\delta , f^{\delta})}^{\delta} - \cT\varphi^{\dagger}}_{\cL^{2}(\cZ)} \right) .
\nonumber
\eea
From the early observation (\ref{consequence_MDP}) and since $\Psi$ is a
monotonically increasing, concave function, we obtain,
\bea
\label{J_diff2}
J(\varphi^{\dagger}) - J(\varphi_{\alpha(\delta , f^{\delta})}^{\delta}) \leq 
\Psi\left( (\overline{\tau} + 1)\delta \right) \leq (\overline{\tau} + 1)\Psi(\delta).
\eea
Regarding the aimed upper bound for the Bregman distance, 
use the estimation (\ref{J_diff2}) for $\alpha(\delta , f^{\delta}) \in \overline{S}$ 
and observe the following for $p \in \partial J(\varphi_{\alpha(\delta , f^{\delta})}^{\delta}),$
\bea
D_{J}(\varphi^{\dagger} , \varphi_{\alpha(\delta , f^{\delta})}^{\delta}) & = & 
J(\varphi^{\dagger}) - J(\varphi_{\alpha(\delta , f^{\delta})}^{\delta}) 
- \langle p , \varphi^{\dagger} - 
\varphi_{\alpha(\delta , f^{\delta})}^{\delta} \rangle
\nonumber\\
& = & J(\varphi^{\dagger}) - J(\varphi_{\alpha(\delta , f^{\delta})}^{\delta})
+ \langle p , \varphi_{\alpha(\delta , f^{\delta})}^{\delta} - \varphi^{\dagger}  \rangle
\nonumber\\
& \leq & (\overline{\tau} + 1)\Psi(\delta)
+ \langle p , \varphi_{\alpha(\delta , f^{\delta})}^{\delta} - \varphi^{\dagger}  \rangle
\eea
Since the regularized solution
$\varphi_{\alpha(\delta , f^{\delta})}^{\delta}$ satisfies 
the first order optimality condition (\ref{optimality_1}), 
we then have,
\bea
D_{J}(\varphi^{\dagger} , \varphi_{\alpha(\delta , f^{\delta})}^{\delta})
& \leq & (\overline{\tau} + 1)\Psi(\delta) + 
\frac{1}{\alpha(\delta , f^{\delta})} \langle \cT^{\ast}(f^{\delta} - 
\cT\varphi_{\alpha(\delta , f^{\delta})}^{\delta}) , 
\varphi_{\alpha(\delta , f^{\delta})}^{\delta} - \varphi^{\dagger} \rangle .
\nonumber
\eea
Now apply the Cauchy-Schwarz inequality and 
take into account the estimation (\ref{consequence_MDP}), 
$\norm{\cT\varphi_{\alpha(\delta , f^{\delta})}^{\delta} - f^{\delta}}_{\cL^{2}(\cZ)} \leq \overline{\tau}\delta$
for the choice of regularization parameter 
$\alpha(\delta , f^{\delta}) \in \overline{S},$
to arrive at
\bea
D_{J}(\varphi^{\dagger} , \varphi_{\alpha(\delta , f^{\delta})}^{\delta})
& \leq & (\overline{\tau} + 1)\Psi(\delta) +
\frac{1}{\alpha(\delta , f^{\delta})} \norm{\cT\varphi_{\alpha(\delta , f^{\delta})}^{\delta} - f^{\delta}}_{\cL^{2}(\cZ)}
\norm{\cT\varphi_{\alpha(\delta , f^{\delta})}^{\delta} - \cT\varphi^{\dagger}}_{\cL^{2}(\cZ)}
\nonumber\\
& \leq & (\overline{\tau} + 1)\Psi(\delta) + 
\frac{1}{\alpha(\delta , f^{\delta})}\overline{\tau}\delta 
\norm{\cT\varphi_{\alpha(\delta , f^{\delta})}^{\delta} - \cT\varphi^{\dagger}}_{\cL^{2}(\cZ)} 
\nonumber\\
& \leq & (\overline{\tau} + 1)\Psi(\delta) + 
\frac{1}{\alpha(\delta , f^{\delta})}\overline{\tau}(\overline{\tau} + 1)\delta^2
\nonumber\\
& \leq & (\overline{\tau} + 1) \Psi(\delta) + 
\frac{4}{\sigma} \frac{\overline{\tau}(\overline{\tau} + 1)}{(\underline{\tau} - 1)}\Psi(\delta).
\nonumber
\eea
Here, again, the lower bound for the regularization parameter $\alpha(\delta , f^{\delta})$
given in (\ref{index_func_lower_bound1}) has controlled the trade-off
between $\delta^2$ and $\alpha.$ Hence,
this yields the stable upper bound (\ref{theorem_upper_bound_bregman}).
\end{proof}

Upper bounds obtained in the theorems \ref{theorem_Bregman_bound_usual} and \ref{theorem_norm_convergence}
provide upper bound for the symmetric Bregman distance defined in (\ref{symmetrical_bregman}).
The finalizing result of this section can be compared to 
\textbf{\cite[Proof of Theorem 4.4]{Grasmair13}}.

\begin{cor}
\label{corollary_symmetric_bregman_bound}
From the theorems \ref{theorem_Bregman_bound_usual} and \ref{theorem_norm_convergence},
and by the definition given in (\ref{symmetrical_bregman}),
it is concluded that
\bea
D_{J}^{\mathrm{sym}}(\varphi_{\alpha(\delta , f^{\delta})}^{\delta} , \varphi^{\dagger})
= \cO(\Psi(\delta)), \mbox{ as } \delta \rightarrow 0.
\eea
\end{cor}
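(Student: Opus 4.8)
The plan is to derive the claim directly from the two preceding theorems together with the defining identity (\ref{symmetrical_bregman}) for the symmetric Bregman distance, so essentially all of the analytic work has already been carried out. First I would split
\bea
D_{J}^{\mathrm{sym}}(\varphi_{\alpha(\delta , f^{\delta})}^{\delta} , \varphi^{\dagger})
= D_{J}(\varphi_{\alpha(\delta , f^{\delta})}^{\delta} , \varphi^{\dagger})
+ D_{J}(\varphi^{\dagger} , \varphi_{\alpha(\delta , f^{\delta})}^{\delta}),
\nonumber
\eea
which reduces the statement to bounding the usual Bregman distance and its reverse form separately, each by a multiple of $\Psi(\delta)$.

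The first summand is handled by Theorem \ref{theorem_Bregman_bound_usual}: under Assumption \ref{assump_conventional_variational_ineq} and the hypotheses of Lemma \ref{lemma_J_diff2} it gives $D_{J}(\varphi_{\alpha(\delta , f^{\delta})}^{\delta} , \varphi^{\dagger}) \leq \left( \frac{2}{\sigma(\underline{\tau}-1)} + \overline{\tau} + 1 \right)\Psi(\delta)$. The second summand is handled by Theorem \ref{theorem_norm_convergence}, which under the same standing assumptions yields $D_{J}(\varphi^{\dagger} , \varphi_{\alpha(\delta , f^{\delta})}^{\delta}) \leq \left( (\overline{\tau}+1) + \frac{4\overline{\tau}(\overline{\tau}+1)}{\sigma(\underline{\tau}-1)} \right)\Psi(\delta)$. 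Adding the two estimates I obtain $D_{J}^{\mathrm{sym}}(\varphi_{\alpha(\delta , f^{\delta})}^{\delta} , \varphi^{\dagger}) \leq C(\sigma , \underline{\tau} , \overline{\tau})\,\Psi(\delta)$ for an explicit constant depending only on the VSC coefficient $\sigma$ and the discrepancy radii $\underline{\tau} , \overline{\tau}$; letting $\delta \to 0$ then gives the asserted rate $\cO(\Psi(\delta))$.

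Because both theorems already presuppose the same \emph{a posteriori} choice $\alpha(\delta , f^{\delta}) \in \overline{S} \cap \underline{S}$ with $f^{\delta} \in \cB_{\delta}(f^{\dagger})$ and the validity of the VSC for $\varphi^{\dagger}$, no new hypotheses are needed and there is no genuine obstacle in this corollary. The only point that deserves a line of care is the mutual consistency of the two one-sided bounds when they are summed: the subgradient occurring in the reverse Bregman distance must be the one provided by the first order optimality condition (\ref{optimality_1}), which is precisely the subgradient used inside the proof of Theorem \ref{theorem_norm_convergence}, so that the sum is well defined. I would close by remarking that the ordering (\ref{symmetric_bregman_ordering}) shows $D_{J}^{\mathrm{sym}}$ dominates each one-sided distance, so $\cO(\Psi(\delta))$ is also the best rate attainable from these estimates, which places the corollary on the same footing as Theorems \ref{theorem_Bregman_bound_usual} and \ref{theorem_norm_convergence}.
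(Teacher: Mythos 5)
Your proposal is correct and follows exactly the route the paper intends: the corollary carries no separate proof beyond summing the bounds of Theorems \ref{theorem_Bregman_bound_usual} and \ref{theorem_norm_convergence} via the definition (\ref{symmetrical_bregman}), which is precisely what you do, with the added (and welcome) explicit constant and the remark on which subgradient enters the reverse distance. No discrepancy with the paper's argument.
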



\section{Conclusion and Future Prospects}
\label{conclusion}

The goal of this work has been providing a general analysis
for the verification of the generalized variational source
condition given by (\ref{variational_ineq}). Without specification
of the rule for the choice of the regularization parameter, the results
above would not have been obtained. Certainly, further necessary
tool is a stable lower bound for $\alpha(\delta , f^{\delta})$
stated by Theorem \ref{new_lower_bound_regpar}. The condition given in
(\ref{VSC_verification_cond}) has been mentioned in 
\textbf{\cite[Theorem 4.4]{Grasmair13}} but only for the quadratic
Tikhonov functional. 

Further generalization of this work would be possible by considering
the following form of the Tikhonov functional,
\bea
\label{conc_objective_func}
F_{\alpha}(\varphi , f^{\delta}) := 
\frac{1}{q}\norm{\cT\varphi - f^{\delta}}_{\cH}^q + \alpha J(\varphi).
\eea
The order of norm $q$ will change the rates of the error estimation.

Interpretation of this work will be introduced with considering different
penalty terms $J.$ From the early assumption (\ref{total_error_bregman}),
a lower bound which is a function of corresponding norm, say 
$\Phi(\norm{\varphi_{\alpha}^{\delta} - \varphi^{\dagger}}_{\cV})$ 
per different $J$ will permit one to obtain norm convergence result.
With the involvement of any $J$ in (\ref{cost_functional}),
or equivalently in (\ref{conc_objective_func}), defining regularity
properties for the solution function $\varphi$ will be broadened.
To be more specific, different norm convergence and convergence rates
results will also follow from which function space $\cV$ is considered.

\newpage


\begin{appendices}
\chapter{\textbf{APPENDIX}}
\section{A Symmetric Form of the Variational Source Condition}
\label{symmetric_VSC}

Here, we question whether it is plausible to state
different form of VSC which is rather associated with
the reverse Bregman distance.

\begin{lemma}
Denote by $\varphi_{\alpha(\delta , f^{\delta})}^{\delta}$
the regularized solution for the problem (\ref{problem0})
where the regularization parameter 
$\alpha(\delta , f^{\delta}) \in \overline{S} \cap \underline{S}$
is chosen {\em a posteriori}. Let $\varphi^{\dagger}$ be the
$J-$minimizing solution introduced in (\ref{J-min_est}).
Then, by this choice of regularization parameter,
the following $J$-difference 
\bea
J(\varphi_{\alpha(\delta , f^{\delta})}^{\delta}) - J(\varphi^{\dagger}) = \cO(\delta^2),
\eea
holds true.
\end{lemma}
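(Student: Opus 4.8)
The plan is to follow the same route as in the proof of Lemma~\ref{lemma_J_diff1}, extracting the $J$-difference solely from the minimality of the regularized solution together with the stability that the a posteriori choice $\alpha(\delta,f^{\delta})\in\overline{S}\cap\underline{S}$ guarantees. First I would compare $F_{\alpha}(\varphi_{\alpha}^{\delta},f^{\delta})\leq F_{\alpha}(\varphi^{\dagger},f^{\delta})$ in (\ref{cost_functional}); since $\|\cT\varphi^{\dagger}-f^{\delta}\|_{\cH}=\|f^{\dagger}-f^{\delta}\|_{\cH}\leq\delta$, dropping the non-negative residual $\frac{1}{2}\|\cT\varphi_{\alpha}^{\delta}-f^{\delta}\|_{\cH}^{2}$ leaves $\alpha\bigl(J(\varphi_{\alpha}^{\delta})-J(\varphi^{\dagger})\bigr)\leq\frac{1}{2}\delta^{2}$, that is $J(\varphi_{\alpha}^{\delta})-J(\varphi^{\dagger})\leq\frac{\delta^{2}}{2\alpha}$, which is precisely (\ref{J-diff}).

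The next step is to absorb the $\delta^{2}/\alpha$ term by feeding in a lower bound for $\alpha(\delta,f^{\delta})\in\underline{S}$, namely either (\ref{lower_bound_regpar}) or, more conveniently, the bound (\ref{index_func_lower_bound1}) from Theorem~\ref{new_lower_bound_regpar}. Substituting it cancels the $\delta^{2}$ in the numerator against the $\delta^{2}$ in the denominator of that bound and leaves a fixed multiple of the index function at a constant multiple of $\delta$; the factors $\underline{\tau}\pm 1$ are then collapsed using monotonicity and concavity of $\Psi$, exactly as in the chain (\ref{trade_off_control})--(\ref{trade_off_control2}), which produces the asserted $\cO$-rate. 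To control the difference from below as well, I would re-run the minimality estimate keeping the residual and using $\alpha\in\underline{S}$, i.e. $\|\cT\varphi_{\alpha}^{\delta}-f^{\delta}\|_{\cH}\geq\underline{\tau}\delta$: this gives $\alpha\bigl(J(\varphi_{\alpha}^{\delta})-J(\varphi^{\dagger})\bigr)\leq\frac{1}{2}(1-\underline{\tau}^{2})\delta^{2}<0$, so the $J$-difference is in fact non-positive; combining this with the VSC (Assumption~\ref{assump_conventional_variational_ineq}), the non-negativity of $D_{J}$ and (\ref{consequence_MDP}), namely $J(\varphi_{\alpha}^{\delta})-J(\varphi^{\dagger})\geq-\Psi\bigl((\overline{\tau}+1)\delta\bigr)$, pins the absolute value to the same order.

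The single genuine obstacle is the familiar trade-off: $\delta^{2}/(2\alpha)$ degenerates as $\alpha\to 0$, so the whole argument hinges on having a lower bound for $\alpha(\delta,f^{\delta})$ of the correct order in $\delta$, which is exactly what Morozov's discrepancy principle supplies through Theorem~\ref{new_lower_bound_regpar}, and which explains why no rate is available without fixing the parameter-choice rule. The remaining point is routine bookkeeping: one must invoke concavity of $\Psi$ only in the admissible direction $\Psi(c\delta)\leq c\,\Psi(\delta)$ for $c\geq 1$, just as in the proof of Lemma~\ref{lemma_J_diff1}.
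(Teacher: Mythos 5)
Your opening step and your closing observation are both fine, but the core of your argument proves the wrong statement. After dropping the residual you have $J(\varphi_{\alpha}^{\delta})-J(\varphi^{\dagger})\leq\delta^{2}/(2\alpha)$, and you then substitute the lower bound of Theorem \ref{new_lower_bound_regpar} for $\alpha(\delta,f^{\delta})$. That substitution trades $\delta^{2}/\alpha$ for a multiple of $\Psi(\delta)$ and therefore yields $J(\varphi_{\alpha(\delta,f^{\delta})}^{\delta})-J(\varphi^{\dagger})=\cO(\Psi(\delta))$ --- which is exactly Lemma \ref{lemma_J_diff2}, not the present lemma. For a concave index function one has $\delta^{2}=\cO(\Psi(\delta))$ (cf.\ Corollary \ref{cor_index_func_lower_bound}) but not the converse, so $\cO(\Psi(\delta))$ is strictly weaker than the asserted $\cO(\delta^{2})$; the step you describe as "producing the asserted $\cO$-rate" in fact destroys the $\delta^{2}$ rate you already had. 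The same objection applies to your two-sided refinement: the lower bound you extract from the VSC and $D_{J}\geq 0$ is $-\Psi((\overline{\tau}+1)\delta)$, so the absolute value is pinned to order $\Psi(\delta)$, again not $\delta^{2}$.

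The paper's proof goes the other way and never invokes a lower bound for $\alpha$. It keeps the residual, writes
\begin{displaymath}
\alpha\left(J(\varphi_{\alpha}^{\delta})-J(\varphi^{\dagger})\right)\leq
\frac{1}{2}\left(\norm{\cT\varphi^{\dagger}-f^{\delta}}^{2}-\norm{\cT\varphi_{\alpha}^{\delta}-f^{\delta}}^{2}\right),
\end{displaymath}
expands the right-hand side into inner products, and applies Cauchy--Schwarz together with both $\norm{\cT\varphi_{\alpha(\delta,f^{\delta})}^{\delta}-f^{\delta}}\leq\overline{\tau}\delta$ and (\ref{consequence_MDP}) to bound the bracket by $(1+\overline{\tau})^{2}\delta^{2}$, concluding $J(\varphi_{\alpha(\delta,f^{\delta})}^{\delta})-J(\varphi^{\dagger})\leq(1+\overline{\tau})^{2}\delta^{2}/\alpha(\delta,f^{\delta})$; the $1/\alpha$ is deliberately retained (the paper says so explicitly after the lemma), and the $\cO(\delta^{2})$ is read as a one-sided upper bound with the $\alpha$-dependence absorbed into the constant. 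Ironically, your side remark that $\alpha(J(\varphi_{\alpha}^{\delta})-J(\varphi^{\dagger}))\leq\frac{1}{2}(1-\underline{\tau}^{2})\delta^{2}<0$ is correct and already gives an upper bound of exactly this $\delta^{2}/\alpha$ form (indeed a negative one); had you stopped there instead of routing through $\Psi$, you would have had the one-sided estimate the paper actually proves. As written, though, the main line of your argument establishes a different, weaker rate.
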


\begin{proof}
The regularized solution $\varphi_{\alpha}^{\delta} \in \cD(F_{\alpha})$ 
surely implies 
\bea
\label{J-diff2}
\alpha \left( J(\varphi_{\alpha}^{\delta}) - J(\varphi^{\dagger}) \right) \leq 
\frac{1}{2} \left( \norm{\cT\varphi^{\dagger} - f^{\delta}}_{\cL^{2}(\cZ)}^2 - \norm{\cT\varphi_{\alpha}^{\delta} - f^{\delta}}_{\cL^{2}(\cZ)}^2 \right).
\eea
The right hand side of this inequality can recalculated in the following way,
\bea
\norm{\cT\varphi^{\dagger} - f^{\delta}}_{\cL^{2}(\cZ)}^2 - \norm{\cT\varphi_{\alpha}^{\delta} - f^{\delta}}_{\cL^{2}(\cZ)}^2 
= \langle \cT\varphi^{\dagger} - f^{\delta} , \cT\varphi^{\dagger} - f^{\delta} \rangle - 
\langle \cT\varphi_{\alpha}^{\delta} - f^{\delta} , \cT\varphi_{\alpha}^{\delta} - f^{\delta} \rangle
\nonumber\\
= \langle \cT\varphi^{\dagger} - f^{\delta} , \cT\varphi^{\dagger} - f^{\delta} \rangle - 
\langle \cT\varphi_{\alpha}^{\delta} - f^{\delta} , \cT\varphi_{\alpha}^{\delta} - \cT\varphi^{\dagger} \rangle
- \langle \cT\varphi_{\alpha}^{\delta} - f^{\delta} , \cT\varphi^{\dagger} - f^{\delta} \rangle
\nonumber\\
= \langle \cT\varphi^{\dagger} - \cT\varphi_{\alpha}^{\delta} , \cT\varphi^{\dagger} - f^{\delta} \rangle 
- \langle \cT\varphi_{\alpha}^{\delta} - f^{\delta} , \cT\varphi_{\alpha}^{\delta} - \cT\varphi^{\dagger} \rangle
\eea
We now apply Cauchy-Schwarz ineuqality,
\bea
\norm{\cT\varphi^{\dagger} - f^{\delta}}_{\cL^{2}(\cZ)}^2 - \norm{\cT\varphi_{\alpha}^{\delta} - f^{\delta}}_{\cL^{2}(\cZ)}^2 
\leq \delta \Vert\cT\varphi^{\dagger} - \cT\varphi_{\alpha}^{\delta}\Vert_{\cL^{2}(\cZ)} + 
\Vert \cT\varphi_{\alpha}^{\delta} - f^{\delta} \Vert_{\cL^{2}(\cZ)} \Vert \cT\varphi_{\alpha}^{\delta} - \cT\varphi^{\dagger} \Vert_{\cL^{2}(\cZ)}.
\nonumber
\eea
By the choice of regularization parameter 
$\alpha(\delta , f^{\delta}) \in \overline{S} \cap \underline{S},$
we obtain,
\bea
\norm{\cT\varphi^{\dagger} - f^{\delta}}_{\cL^{2}(\cZ)}^2 - \norm{\cT\varphi_{\alpha(\delta , f^{\delta})}^{\delta} - f^{\delta}}_{\cL^{2}(\cZ)}^2 
& \leq & \delta \Vert\cT\varphi^{\dagger} - \cT\varphi_{\alpha(\delta , f^{\delta})}^{\delta}\Vert_{\cL^{2}(\cZ)} +
\overline{\tau}\delta \Vert \cT\varphi_{\alpha(\delta , f^{\delta})}^{\delta} - \cT\varphi^{\dagger} \Vert_{\cL^{2}(\cZ)}
\nonumber\\
& = &(1 + \overline{\tau})\delta \Vert \cT\varphi_{\alpha(\delta , f^{\delta})}^{\delta} - \cT\varphi^{\dagger} \Vert_{\cL^{2}(\cZ)}
\nonumber
\eea
Further step from here can be estimated by taking into account (\ref{consequence_MDP})
\bea
\norm{\cT\varphi^{\dagger} - f^{\delta}}_{\cL^{2}(\cZ)}^2 - \norm{\cT\varphi_{\alpha(\delta , f^{\delta})}^{\delta} - f^{\delta}}_{\cL^{2}(\cZ)}^2 
\leq (1 + \overline{\tau})^2 \delta^2 .
\nonumber
\eea
Thus, from (\ref{J-diff2}), we arrive at
\bea
\label{J-diff0}
J(\varphi_{\alpha(\delta , f^{\delta})}^{\delta}) - J(\varphi^{\dagger}) \leq 
(1 + \overline{\tau})^2 \frac{\delta^2}{\alpha(\delta , f^{\delta})}.
\eea
\end{proof}

We make use of the estimation (\ref{J-diff0}) to compare 
the well-known index function against generalized reverse 
Bregman distance in the following theorem without avoiding
possible singularity as $\alpha \rightarrow 0.$ This comparison
may give birth to a new form of the VSC.

\begin{theorem}
Denote by $\varphi_{\alpha(\delta , f^{\delta})}^{\delta}$
the regularized solution for the problem (\ref{problem0})
where the regularization parameter 
$\alpha(\delta , f^{\delta}) \in \overline{S} \cap \underline{S}$
is chosen {\em a posteriori}. Let $\varphi^{\dagger}$ be the
$J-$minimizing solution introduced in (\ref{J-min_est}).
Then, for some index function 
$\Psi : [0 , \infty) \rightarrow [0 , \infty),$ the following
\bea
\label{Bregman_Index_func}
D_{J}(\varphi^{\dagger} , \varphi_{\alpha(\delta , f^{\delta})}^{\delta}) \leq \Psi(t),
\eea
holds true.
\end{theorem}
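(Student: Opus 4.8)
The plan is to unwind the reverse Bregman distance straight from its definition (\ref{bregman_divergence_intro}), with the subgradient taken at the regularized solution, and then to substitute the first-order optimality condition (\ref{optimality_1}) for that subgradient. Writing $\alpha = \alpha(\delta,f^{\delta})$ and choosing $p = \frac{1}{\alpha}\cT^{\ast}(f^{\delta} - \cT\varphi_{\alpha}^{\delta})\in\partial J(\varphi_{\alpha}^{\delta})$, the adjoint identity converts the pairing into one in $\cL^{2}(\cZ)$ and gives
\[
D_{J}(\varphi^{\dagger},\varphi_{\alpha}^{\delta})
= \left( J(\varphi^{\dagger}) - J(\varphi_{\alpha}^{\delta}) \right)
+ \frac{1}{\alpha}\left\langle f^{\delta} - \cT\varphi_{\alpha}^{\delta},\ \cT\varphi_{\alpha}^{\delta} - \cT\varphi^{\dagger}\right\rangle .
\]
Thus the task splits into bounding the $J$-difference and the residual pairing separately.

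First I would treat the pairing: by the Cauchy--Schwarz inequality combined with the discrepancy-principle bounds $\Vert\cT\varphi_{\alpha}^{\delta} - f^{\delta}\Vert_{\cL^{2}(\cZ)}\le\overline{\tau}\delta$ from (\ref{discrepancy_pr_definition}) and $\Vert\cT\varphi_{\alpha}^{\delta} - \cT\varphi^{\dagger}\Vert_{\cL^{2}(\cZ)}\le(\overline{\tau}+1)\delta$ from (\ref{consequence_MDP}), this term is at most $\overline{\tau}(\overline{\tau}+1)\,\delta^{2}/\alpha$. Then I would control the $J$-difference by invoking the preceding lemma, that is, the estimate (\ref{J-diff0}), so that $J(\varphi^{\dagger})-J(\varphi_{\alpha}^{\delta})$ contributes a term of the same $\delta^{2}/\alpha$ type (and, if a cleaner constant is preferred, one may simply use $J\ge 0$ to cap $J(\varphi^{\dagger})-J(\varphi_{\alpha}^{\delta})\le J(\varphi^{\dagger})$). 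Collecting these produces a bound of the shape
\[
D_{J}(\varphi^{\dagger},\varphi_{\alpha}^{\delta})\ \le\ c(\underline{\tau},\overline{\tau})\,\frac{\delta^{2}}{\alpha(\delta,f^{\delta})}.
\]

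It remains to read the right-hand side as $\Psi$ evaluated at a suitable argument. With $\alpha(\delta,f^{\delta})$ held fixed, the map $t\mapsto c(\underline{\tau},\overline{\tau})\,t^{2}/\alpha(\delta,f^{\delta})$ is continuous, monotonically increasing and vanishes at $0$, hence is an admissible index function; taking $t$ to be the image-space discrepancy $\Vert\cT\varphi_{\alpha}^{\delta}-\cT\varphi^{\dagger}\Vert_{\cL^{2}(\cZ)}$ (equivalently $t=\delta$, up to the constants already present) yields the asserted comparison (\ref{Bregman_Index_func}). The essential obstacle --- and the reason the statement is a comparison rather than a convergence rate --- is that, without Assumption \ref{assump_conventional_variational_ineq}, there is no mechanism to upgrade the factor $\delta^{2}/\alpha(\delta,f^{\delta})$ to the clean $\cO(\Psi(\delta))$ obtained in Theorem \ref{theorem_norm_convergence}, so one must accept a bound that is singular as $\alpha\to 0$. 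A secondary point requiring care is to pin down the argument $t$ so that the inequality is not vacuous, and to verify that the index function so constructed indeed satisfies $\Psi(0)=0$.
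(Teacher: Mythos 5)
Your computation is, in substance, the same one the paper runs: expand the reverse Bregman distance (\ref{bregman_divergence_intro}) with the subgradient supplied by the optimality condition (\ref{optimality_1}), estimate the residual pairing by Cauchy--Schwarz together with (\ref{discrepancy_pr_definition}) and (\ref{consequence_MDP}) to get $\overline{\tau}(\overline{\tau}+1)\delta^{2}/\alpha$, and handle the $J$-difference with the preceding lemma. The only structural difference is that you argue forwards to an explicit upper bound, whereas the paper assumes $D_{J}(\varphi^{\dagger},\varphi^{\delta}_{\alpha})>\Psi(t)$ and works toward a contradiction with $\Psi\geq 0$. That difference is cosmetic; the substantive problem lies in a step you share with the paper.

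The gap is in the $J$-difference. The estimate (\ref{J-diff0}) bounds $J(\varphi^{\delta}_{\alpha})-J(\varphi^{\dagger})$ from above by $(1+\overline{\tau})^{2}\delta^{2}/\alpha$, which yields only a \emph{lower} bound on the quantity you must control, namely $J(\varphi^{\dagger})-J(\varphi^{\delta}_{\alpha})$; it cannot ``contribute a term of the same $\delta^{2}/\alpha$ type'' to an upper bound on $D_{J}(\varphi^{\dagger},\varphi^{\delta}_{\alpha})$. (The paper's own proof makes exactly this wrong-direction substitution, which is why it ends with $\Psi(t)$ apparently dominated by $\overline{\tau}(1+\overline{\tau})\delta^{2}/\alpha-(1+\overline{\tau})^{2}\delta^{2}/\alpha=-(1+\overline{\tau})\delta^{2}/\alpha<0$; read forwards, that would assert that a nonnegative Bregman distance is bounded by a negative number.) Your fallback $J(\varphi^{\dagger})-J(\varphi^{\delta}_{\alpha})\leq J(\varphi^{\dagger})$ is legitimate since $J\geq 0$, but it leaves an additive constant that does not vanish with $\delta$, so the resulting bound $J(\varphi^{\dagger})+\overline{\tau}(\overline{\tau}+1)\delta^{2}/\alpha$ can only be matched to $\Psi(t)$ by choosing $\Psi$ and $t$ after the fact, detached from $\delta$ and the discrepancy --- at which point the inequality holds for any finite nonnegative quantity and carries no content, a vacuity you yourself flag. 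Without Assumption \ref{assump_conventional_variational_ineq}, which is precisely what lets Theorem \ref{theorem_norm_convergence} control $J(\varphi^{\dagger})-J(\varphi^{\delta}_{\alpha})$ by $\Psi$ of the image-space discrepancy, neither your direct route nor the paper's contradiction argument actually closes this step.
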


\begin{proof}
Let us begin the proof with assuming the opposite, {\em i.e.} let us assume that
\bea
\label{index_func_contradiction}
D_{J}(\varphi^{\dagger} , \varphi_{\alpha(\delta , f^{\delta})}^{\delta}) > \Psi(t),
\eea
holds for any $t \in [0 , \infty).$ Then, 
by the definition of Bregman distance in (\ref{bregman_divergence_intro}),
for $p \in \partial J(\varphi_{\alpha(\delta , f^{\delta})}^{\delta})$
(\ref{index_func_contradiction}) reads
\bea
J(\varphi^{\dagger}) - J(\varphi_{\alpha(\delta , f^{\delta})}^{\delta})
- \langle p , \varphi^{\dagger} - \varphi_{\alpha(\delta , f^{\delta})}^{\delta} \rangle > \Psi(t)
\nonumber
\eea
Let us include the $J-$difference (\ref{J-diff0}) and rewrite the inner product
\bea
\langle p , \varphi_{\alpha(\delta , f^{\delta})}^{\delta} - \varphi^{\dagger} \rangle - 
(1 + \overline{\tau})^2 \frac{\delta^2}{\alpha(\delta , f^{\delta})} > \Psi(t).
\nonumber
\eea
The regularized solution $\varphi_{\alpha(\delta , f^{\delta})}^{\delta}$
must satisfy the first order optimality condition (\ref{optimality_1}). Thus,
\bea
\frac{1}{\alpha(\delta , f^{\delta})} \langle \cT^{\ast}(f^{\delta} - \cT\varphi_{\alpha(\delta , f^{\delta})}^{\delta}) , 
\varphi_{\alpha(\delta , f^{\delta})}^{\delta} - \varphi^{\dagger} \rangle  - (1 + \overline{\tau})^2 \frac{\delta^2}{\alpha(\delta , f^{\delta})} > \Psi(t).
\nonumber
\eea
By the Cauchy-Schwarz inequality, and by the fact 
that $\alpha(\delta , f^{\delta}) \in \overline{S} \cap \underline{S}$
provides the immediate estimation in (\ref{consequence_MDP}), 
we eventually arrive at the following estimation
\bea
\Psi(t) & < &
\frac{1}{\alpha(\delta , f^{\delta})}\overline{\tau}\delta \norm{\cT\varphi_{\alpha(\delta , f^{\delta})}^{\delta} - \cT\varphi^{\dagger}}_{\cL^{2}(\cZ)}
- (1 + \overline{\tau})^2 \frac{\delta^2}{\alpha(\delta , f^{\delta})} 
\nonumber\\
& < & \frac{1}{\alpha(\delta , f^{\delta})} \overline{\tau} (1 + \overline{\tau})\delta^2 - (1 + \overline{\tau})^2 \frac{\delta^2}{\alpha(\delta , f^{\delta})}
\eea
which is a direct contradiction to the positive definiteness of the index function $\Psi.$
Hence, the assumption in (\ref{index_func_contradiction}) is wrong.
\end{proof}

\end{appendices}


\newpage

\bigskip
\section*{References}


 \bibliographystyle{alpha}

\end{document}